\newtheorem{theorem}{Theorem}[section]
\newtheorem{corollary}[theorem]{Corollary}
\newtheorem{definition}[theorem]{Definition}
\newtheorem{remark}[theorem]{Remark}
\newtheorem{lemma}[theorem]{Lemma}
\def \st {\mathrm{ s.t. }}
\def \sign {\mathrm{sgn}}
\def \sta {\mathrm{star}}
\def \supp {\mathrm{supp}}
\def \R {\mathbb{R}}
\begin{document}

\title {Nonconvex models for recovering images
corrupted by salt-and-pepper noise on surfaces}

\author{Yuan Liu \thanks{School of Mathematics, Hefei University 
of Technology, Hefei, China. E-mail: liuyzz@hfut.edu.cn.
Y. Liu's research is supported in part by the National Natural Science
Foundation of China (No. 62002095).}
\and
Peiqi Yu \thanks{School of Mathematical Sciences and LPMC, Nankai University, 
Tianjin, China. E-mail: 2120220074@mail.nankai.edu.cn.}
\and
Chao Zeng  \thanks{Corresponding author. School of Mathematical Sciences 
and LPMC, Nankai University, Tianjin, China. E-mail: zengchao@nankai.edu.cn.
C. Zeng's research is supported in part by the National Natural Science 
Foundation of China (No. 12201319) and the Fundamental Research Funds for 
the Central Universities, Nankai University (No. 63231142).} }

\date{}
\maketitle

\emph{\textbf{Abstract\ } Image processing on surfaces has drawn 
significant interest in recent years, particularly in the context of denoising. 
Salt-and-pepper noise is a special type of noise which randomly sets a 
portion of the image pixels to the minimum or maximum intensity while 
keeping the others unaffected. In this paper, We propose the L$_p$TV models 
on triangle meshes to recover images corrupted by salt-and-pepper noise
on surfaces. We establish a lower bound for data fitting term of 
the recovered image. Motivated by the lower bound property, we propose the 
corresponding algorithm based on the proximal linearization method with 
the support shrinking strategy. The global convergence of the proposed 
algorithm is demonstrated. Numerical examples are given to show good 
performance of the algorithm.
}

\vspace{0.5cm}
\noindent \textbf{Keywords\ }  image on surface, image denoising, 
salt-and-pepper noise, variational methods, nonconvex optimization

\vspace{0.5cm}
\noindent \textbf{Mathematics Subject Classifications (2010)\ }

\section{Introduction}

Image processing on surface is widely used in computer vision
\cite{John2016Image}, computer graphics\cite{Bajaj2003Anisotropic}, and 
medical imaging \cite{Shi2013Cortical}. To generate an image on 
a surface, we need to transform an image defined on a surface into 
a point cloud by sampling specific points, each associated with a pixel 
value. Subsequently, a triangle mesh is constructed from this point cloud, 
where each vertex corresponds to a pixel value, as illustrated in 
Figure \ref{meshandimage}. The pixel values at arbitrary points on the 
triangle mesh are estimated by interpolation. There is still a relative 
scarcity of research in image processing on surface. Current work includes 
methods based on partial differential equations 
\cite{benninghoff2016segmentation,wu2009scale} and, more commonly, variational 
approaches. For example, variational methods based image segmentation is 
discussed in \cite{delaunoy2009convex,huska2018convex} and image decomposition 
is considered in \cite{wu2013variational,huska2019convex}.

When the data are gathered, errors may be introduced by measurement 
tools, processing or by experts. These errors are called noise. For an 
image on surface, errors may be introduced when sampling. That is, the pixel 
values at the vertices of a triangle mesh may be corrupted by noise.   
Denoising is a fundamental challenge in image processing. Various denoising 
techniques have been proposed in the literature. In the context of 2D images, 
among the methods employed for image denoising are filtering techniques, 
PDEs-based methods, wavelet-based approaches, and variational methods. 
Variational methods, in particular, focus on recovering 2D images by solving 
optimization models that promote gradient sparsity. In recent years, nonconvex 
and nonsmooth variational methods have gained prominence 
\cite{bian2015linearly,hintermuller2013nonconvex,nikolova2008efficient,
ochs2015iteratively} due to their remarkable advantages over convex and smooth 
variational methods, particularly in edge-preserving image restoration. These 
methods are especially effective in restoring high-quality, piecewise constant 
images with neat edges \cite{nikolova2005analysis,zeng2018edge}. Despite their 
advantages, designing efficient algorithms for nonconvex models remains a 
challenging task. Surface image denoising presents significantly greater 
challenges compared to 2D image denoising, as effective denoising in this 
context requires careful consideration of both the data set and the underlying 
mesh structure. Researches on image denoising techniques for triangle meshes 
are few in the existing literature, with only a few studies addressing this 
topic \cite{wu2008diffusion,wu2009scale,lai2011framework,wu2012augmented,
herrmann2018analysis,liu2024non}. Among these, the works of 
\cite{wu2008diffusion,wu2009scale} employ PDE-based methods, with 
\cite{wu2009scale} focusing on a full scale-space analysis. In contrast, 
studies \cite{lai2011framework,wu2012augmented,herrmann2018analysis,liu2024non} 
adopt variational methods, where an analogy to 
the total variation (TV) model \cite{rudin1992nonlinear} on surfaces is 
proposed and examined. A central challenge in these approaches is the 
definition of a discrete gradient for data sets on triangle meshes. A 
commonly adopted strategy is to interpolate the discrete data set over 
the mesh, thereby enabling the definition of the discrete gradient through 
the interpolation function. Linear interpolation is predominantly used in 
studies \cite{lai2011framework,wu2012augmented,wu2008diffusion,wu2009scale,
liu2024non}, while \cite{herrmann2018analysis} 
employs Raviart-Thomas basis functions for interpolation.

\begin{figure}[!ht]
\begin{center}
\begin{tabular}{cccc}
\includegraphics[height=3.0cm]{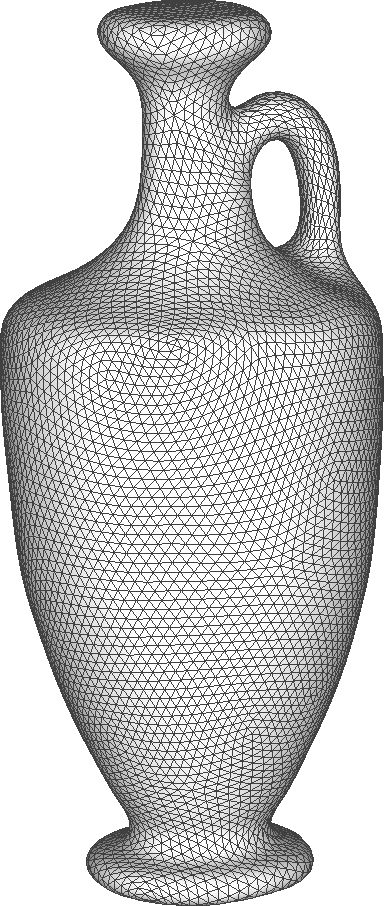}  &
\includegraphics[height=3.0cm]{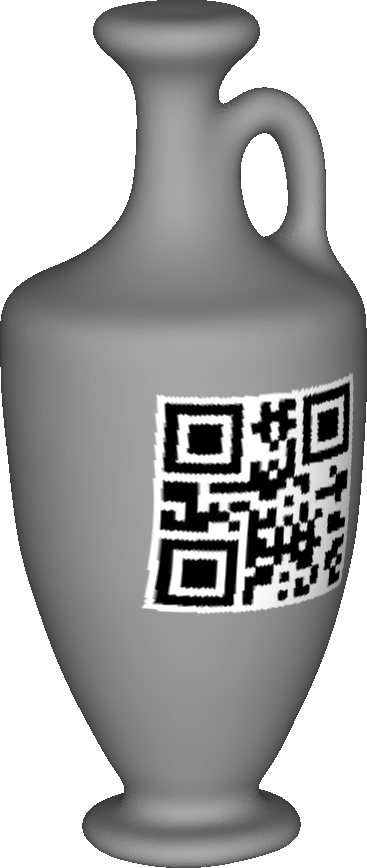}  &
\includegraphics[height=2.6cm]{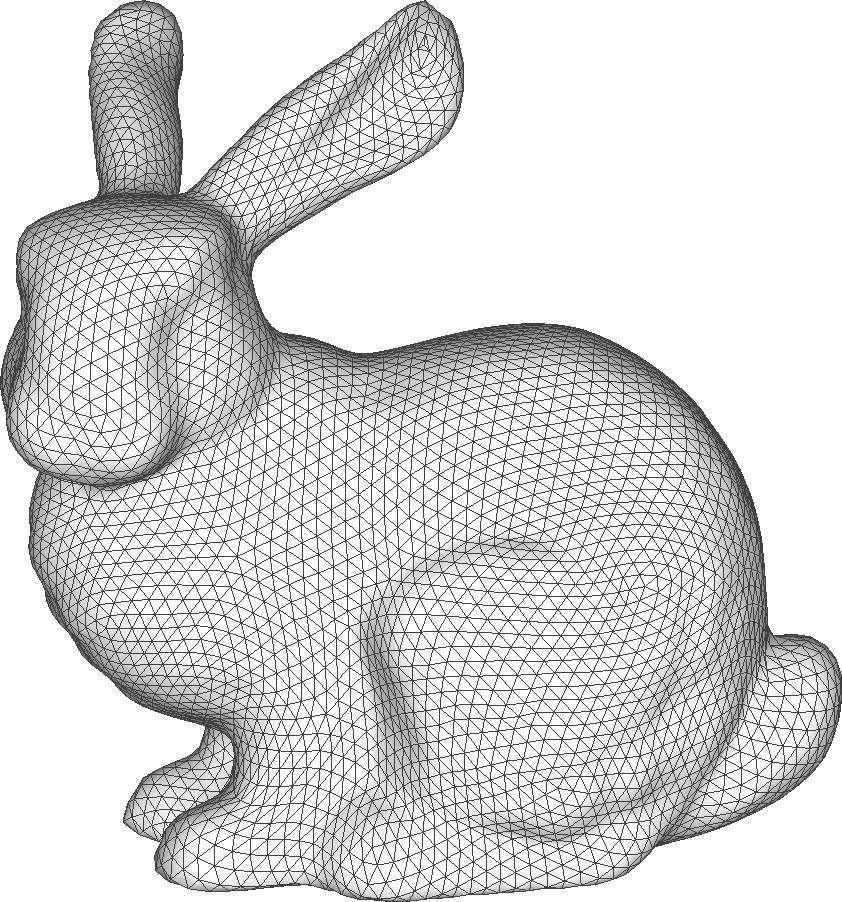} 
\includegraphics[height=2.6cm]{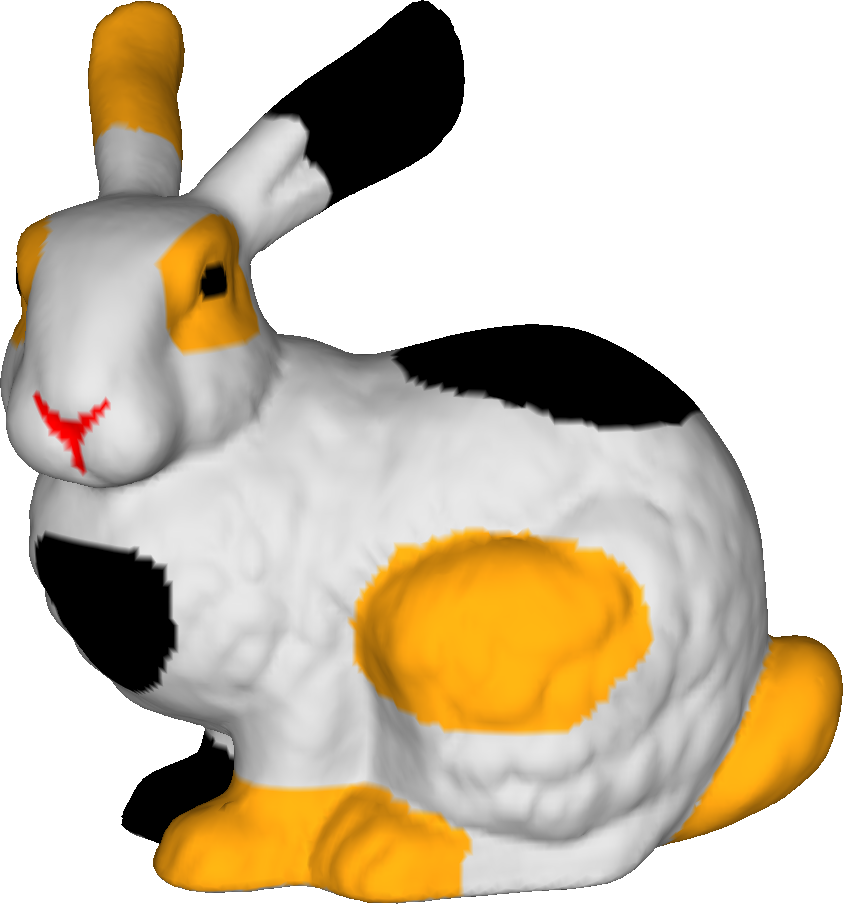} 
\end{tabular}
\caption{Triangle meshes and images on triangle meshes.}
\label{meshandimage}
\end{center}
\end{figure}

Unlike many papers focusing on gaussian noise, in this paper, we consider 
recovering images on surface corrupted by salt-and-pepper noise. This kind 
of noise is usually caused by sudden disturbances in image acquisition, 
such as errors in data transmission, faulty sensors, or analog-to-digital 
conversion errors. In an image with salt-and-pepper noise, some pixels 
are randomly set to the minimum (black) or maximum (white) intensity, while 
others remain unaffected. For 8 bits images, all pixels of the original image 
$\underline{u}$ range from 0 to 255, i.e., $0 \leq \underline{u}_j \leq 255$, 
for all $j$. Then, for the observed data $f$ corrupted by salt-and-pepper 
noise with noise level $\rho$ , we have
\begin{equation}\label{salt}
|\underline{u}_j-f_j|=\begin{cases}
|0-\underline{u}_j|, & \mbox{with probability } \frac{\rho}{2}; \\
|255-\underline{u}_j|, & \mbox{with probability } \frac{\rho}{2}; \\
0, & \mbox{with probability } 1-\rho.
\end{cases}
\end{equation}
It is clear that the restoration of $\underline{u}$ from the data 
corrupted by salt-and-pepper noise has the demand of sparse reconstruction 
of the data fitting term: $(1-\rho)$ percent of entries of the data fitting 
term with the true solution $\underline{u}$ are zero for noise level $\rho$.

From the perspective of sparse reconstruction, $\ell_0$ data fitting term is 
a natural choice, but it is an NP-hard problem. Instead, in 2D image 
processing, the most general alternative is the $\ell_1$
data fitting term \cite{Nikolova2013On,yang2009efficient,chan2005aspects}.
The $\ell_p$ minimization has advantages over the $\ell_1$ minimization
in sparse reconstruction; see \cite{candes2008enhancing,lai2013improved,
nikolova2005analysis,zeng2018edge,liu2024non}. Hence, it is reasonable to 
consider using the $\ell_p$ minimization for the data fitting term. 
Consequently, we consider image denoising on triangle meshes with nonconvex 
variational method. We utilize linear interpolation like \cite{lai2011framework}. 
We propose the L$_p$TV model on triangle meshes. Based on the corresponding 
lower bound property of the model, we adopt the proximal linearization method
(PLM). Every subproblem can be solved by the alternating direction method of 
multipliers (ADMM). We obtain the global convergence of the PLM. The
efficiency of the propose method is illustrated by related numerical 
experiments.

The rest of this paper is organized as follows. In Section \ref{sec2}, we 
introduce basic definitions for images on triangle meshes. In 
Section \ref{sec3}, we present the models, establish a lower bound for 
the data fitting term of local minimizers, and propose the corresponding 
algorithm. The convergence 
analysis are given in Section \ref{sec4}. Applications are given in 
Section \ref{sec5}. Conclusions are presented in Section \ref{sec6}.

\subsection*{Notation}
For a real-valued matrix $G$, $G^\top$ is its transpose, and $\|G\|$ is its 
spectral norm. For two vectors $g,h \in \mathbb{R}^{r}$, the inner product of 
the two vectors is $\langle g,h\rangle = g^\top h$, and the $\ell_2$ norm of 
$g$ is $\|g\|$. Denote by $e_j$ the column vector whose $j$th component is 
one while all the others are zeros. For a real number $r$,
the signum function $\sign(r)$ is defined as
$$
\sign(r)=\begin{cases}
           -1, & \mbox{if } r<0; \\
           0, & \mbox{if } r=0; \\
           1, & \mbox{if } r>0.
         \end{cases}
$$
The exact notation and definition of subdifferential can be found 
in Appendix \ref{apsd}

\section{Preliminaries}\label{sec2}

We introduce the discretization of surfaces and images on surfaces.
These techniques can be found in \cite{wu2009scale,
hirani2003discrete,huska2019convex}.

\subsection{Triangulated surfaces}

A connected surface in $\R^3$ can be approximated by a triangle mesh.
Assume that $\triangle$ is a triangle mesh in $\R^3$ with no degenerate 
triangles. The set of the indices of all vertices and the set of the 
indices of all triangles of $\triangle$ are denoted by $I_{v}$ and 
$I_{\tau}$, respectively. Then, the set of vertices and the set of triangles 
of $\triangle$ are denoted as $\{v_i:i \in I_v\}$, and 
$\{\tau_i:i \in I_{\tau}\}$, respectively. The numbers of vertices and 
triangles are denoted by $\mathrm{N}_v$ and $\mathrm{N}_{\tau}$,
respectively. A triangle whose vertices are $v_i,v_j,v_k$ is denoted as 
$[v_i,v_j,v_k]$. The area of a triangle $\tau$ is denoted by $|\tau|$.

For a vertex $v$, we define the \emph{star} of $v$, denoted by $\sta(v)$, to be 
the set of all triangles in $\triangle$ which share the vertex $v$. For the mesh 
$\triangle$, the \emph{dual mesh} is formed by connecting the barycenter and the 
middle point of each edge in each triangle. As illustrated in Figure
\ref{fig-DMeshCcells}, the original mesh $\triangle$ consists of black lines, 
while the dual mesh is in red. The \emph{control cell} of a vertex $v_i$ is part 
of its star which is near to $v_i$ in the dual mesh. Figure \ref{fig-DMeshCcells} 
shows the control cell $C_i$ for an interior vertex $v_i$ of the original mesh 
and the control cell $C_j$ for a boundary vertex $v_j$. See \cite{wu2009scale,
hirani2003discrete} for more details. 

\begin{figure}[!ht]
\begin{center}
\small
\begin{tabular}{ccc}
\includegraphics[width=2.8cm]{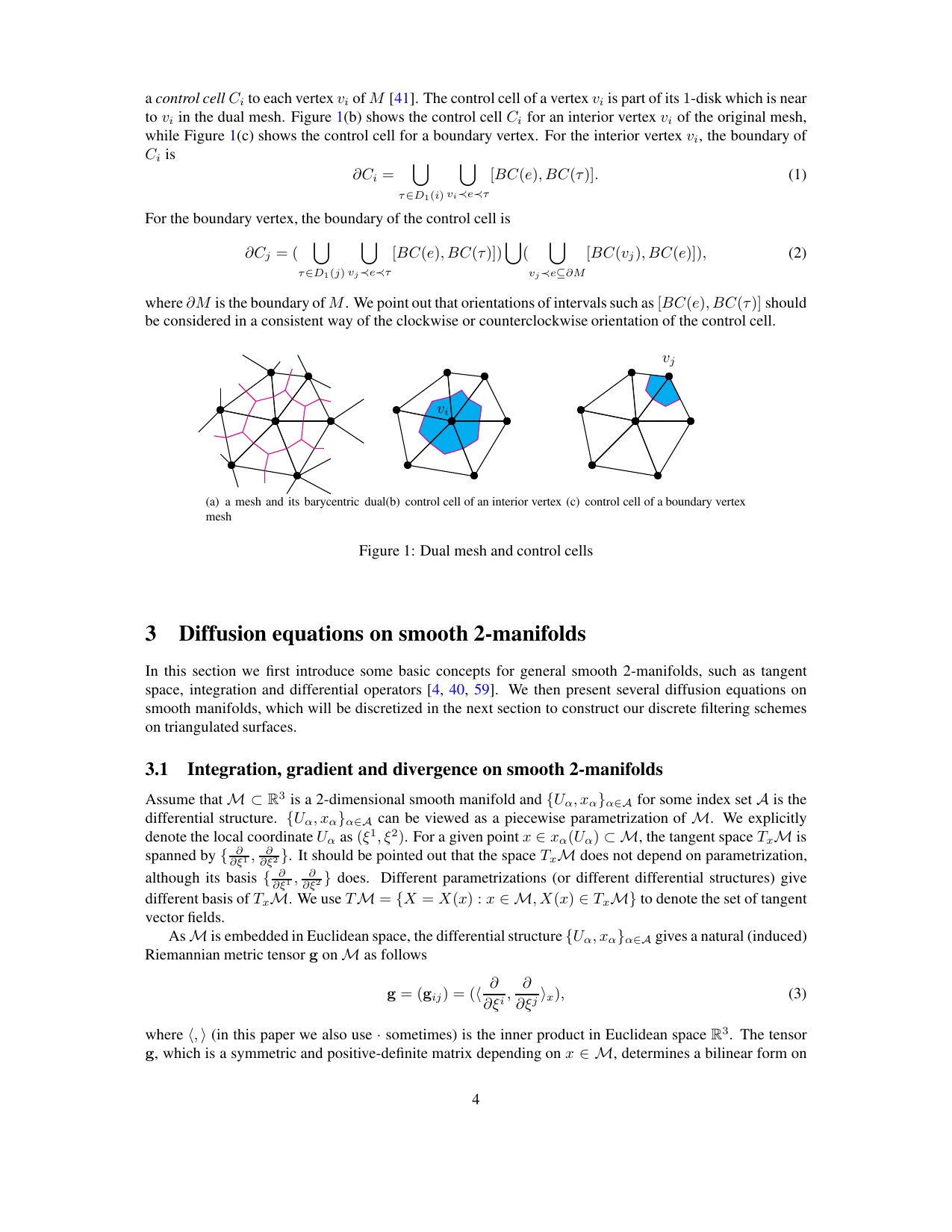} &
\includegraphics[width=2.3cm]{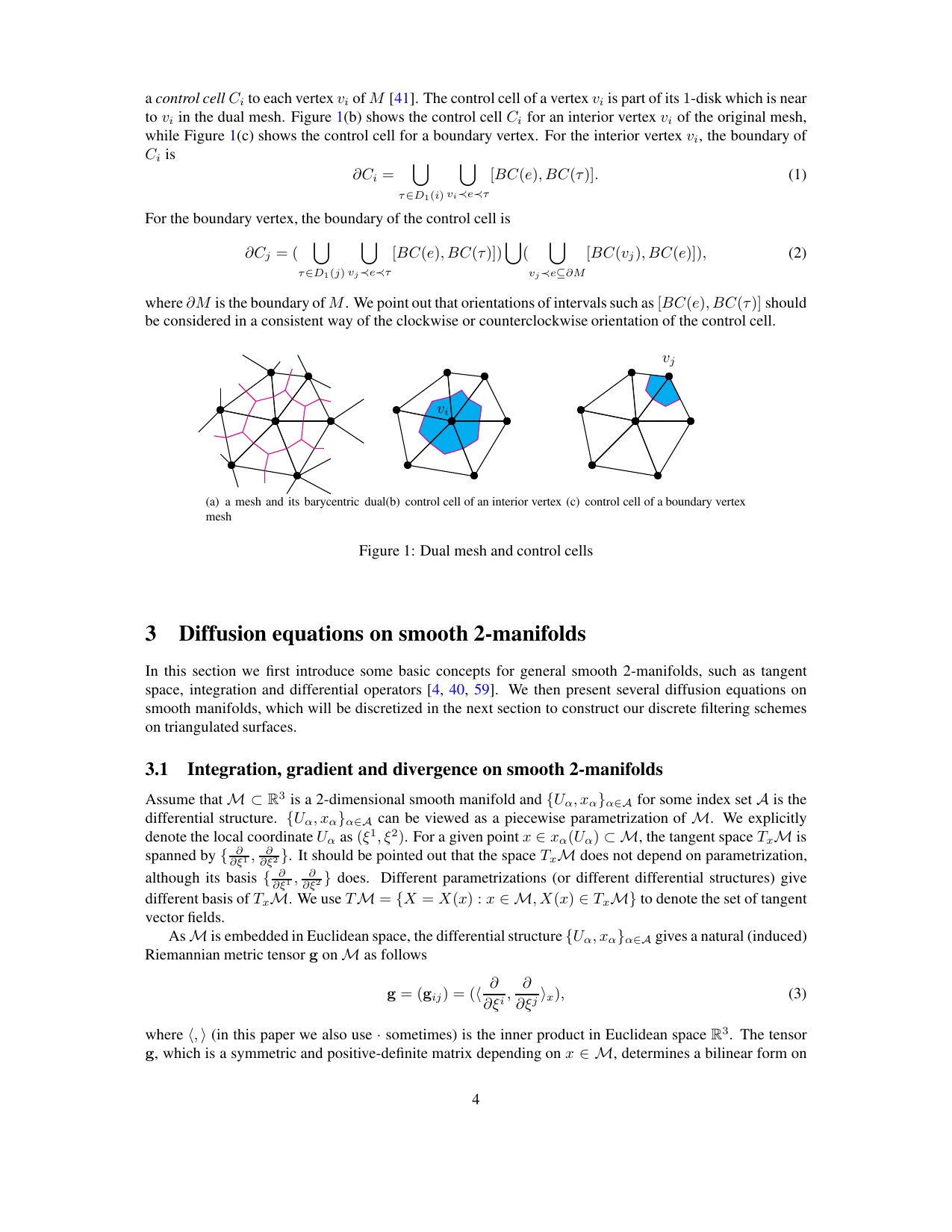} &
\includegraphics[width=2.3cm]{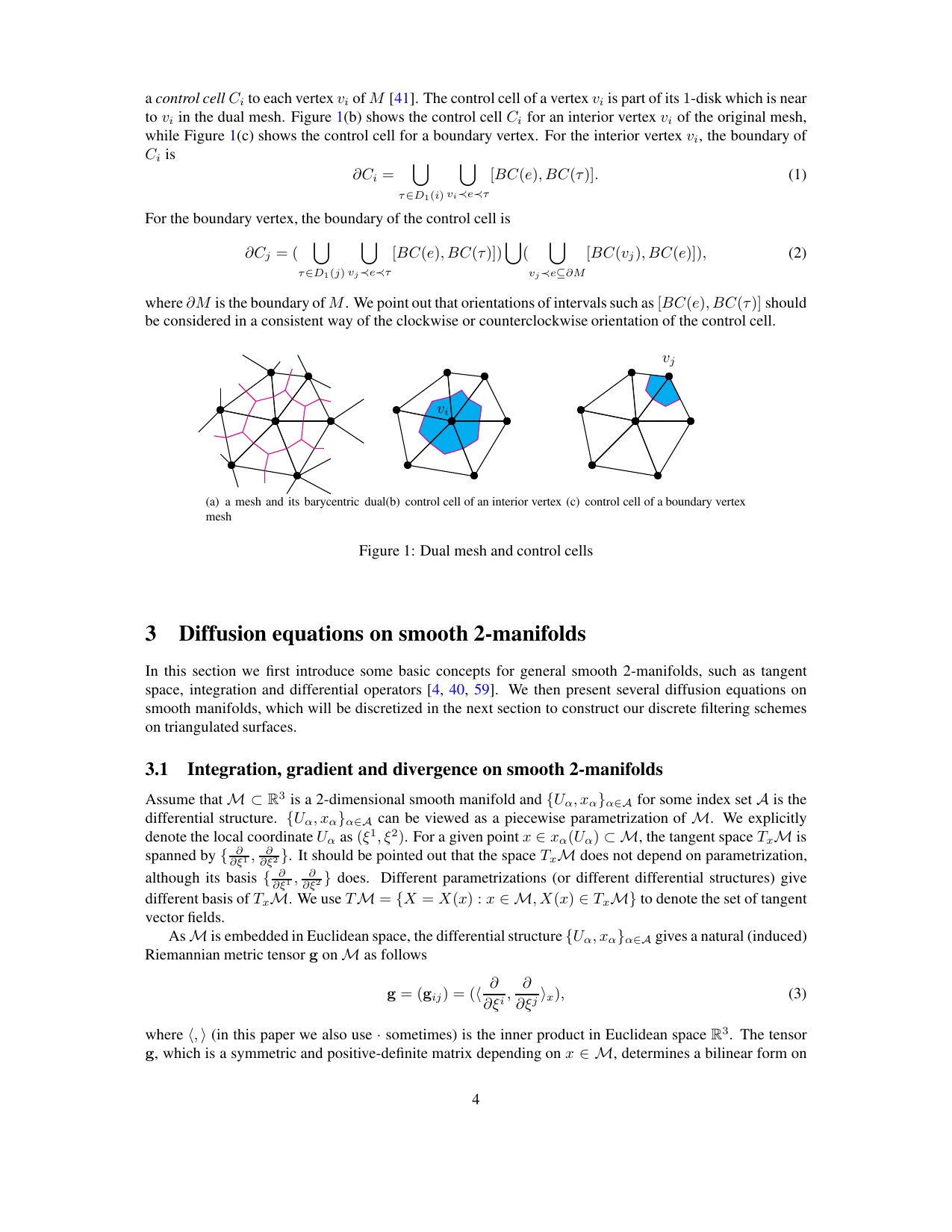}   \\
a mesh and its dual mesh  &
control cell of an interior vertex    &
control cell of a boundary vertex
\end{tabular}
\caption{\label{fig-DMeshCcells} Dual mesh and control cells.}
\end{center}
\end{figure}

\subsection{Discretization of images on surfaces}\label{sec_tri}

A grayscale image on the triangulated surface $\triangle$ is 
a function $\mathfrak{u} : \triangle \rightarrow \mathbb{R}$
with $\mathfrak{u}(x)$ being the pixel value of the point $x$ 
on $\triangle$. The image $\mathfrak{u}$ is sampled over the
vertices of the mesh and the value $\mathfrak{u}(x)$ at any point 
$x$ on $\triangle$ is estimated by linear interpolation. That is,
$\mathfrak{u}$ is understood as a piecewise linear function.

Let 
$$
u_i := \mathfrak{u}(v_i), \quad i=1,2,\ldots,\mathrm{N}_v,
$$
and let $u=[u_1,u_2,\ldots,u_{\mathrm{N}_v}]^\top\in 
\mathbb{R}^{\mathrm{N}_v}$. Denote the piecewise linear space by
$$
\mathcal{L}(\triangle):=\{\mathfrak{u}\in C^0(\triangle): 
\mathfrak{u}|_{\tau_i} \in \mathbb{P}_1,  i \in I_{\tau}\},
$$
where $\mathbb{P}_1$ denotes the space of trivariate linear polynomials. 
For each vertex $v_i$, we use $\phi_i$ to denote the piecewise linear 
function with support on $\sta(v_i)$, i.e., 
$$
\phi_i \in \mathcal{L}(\triangle) \mbox{ and }
\phi_i(v_j)=\delta_{ij}, \,i,j \in I_v,
$$
where $\delta_{ij}$ is the Kronecker delta. Then
$\{\phi_i: i=1,\ldots,\mathrm{N}_v\}$ forms a basis of 
$\mathcal{L}(\triangle)$. The function obtained by 
linear interpolation is
\begin{equation}\label{imageu}
\mathfrak{u}(x)=\sum\limits_{ i \in I_v}u_i\phi_i(x).
\end{equation}
Since $\mathfrak{u}$ is determined by $u$, for convenience,
we also call $u$ an image on surface.

By \cite[(4.2)]{wu2009scale}, we have
$$
\int_{\triangle} \mathfrak{u}\mathrm{\ d}x = \sum_{\tau=[v_l,v_m,v_n]} 
\frac{|\tau|}{3}(u_l+u_m+u_n)
= \sum_{i \in I_v} u_i
\sum_{\tau \in \sta (v_i)} \frac{|\tau|}{3} = \sum_{i \in I_v} u_i s_i,
$$
where $s_i$ is the area of the control cell of the vertex $v_i$.
The discrete gradient of $u$ can be defined by the gradient of $\mathfrak{u}$. 
Since $\mathfrak{u}$ is only $C^0$ on $\triangle$, we consider the gradient 
of $\mathfrak{u}$ on every triangle
$$
\nabla \mathfrak{u}|_{\tau} = \sum_{i \in I_v} u_i \nabla \phi_i|_{\tau}.
$$
The calculation of $\nabla \phi_i|_{\tau}$ is given in \cite{wu2009scale} 
by using the natural piecewise parametrization. See Figure \ref{figgra}, 
where $O$ is the Euclidean projection of $v_i$ on the line passing through 
$v_j,v_k$.

\begin{figure}[!ht]
\begin{center}
\includegraphics[width=3.5cm]{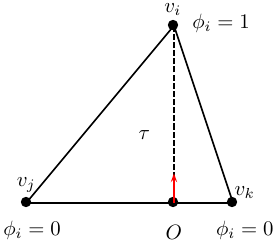}
\caption{\label{figgra} The gradient of the basis $\phi_i$ restricted on 
$[v_i,v_j,v_k]$: the red vector.}
\end{center}
\end{figure}

Denoting $h_i^{\tau}=v_i-O$ (see the red vector in Figure \ref{figgra}), 
the deduction in \cite{wu2009scale} shows that the gradient of $\phi_i$ 
restricted on $\tau=[v_i,v_j,v_k]$ is
$$
\nabla \phi_i|_{[v_i,v_j,v_k]}=\frac{h_i^{\tau}}{|h_i^{\tau}|^2}.
$$
As one can see, $\nabla \phi_i|_{\tau}$ is a constant vector, and thus so is 
$\nabla \mathfrak{u}|_{\tau}$:
\begin{equation}\label{BasisonMesh}
\nabla \mathfrak{u}|_{\tau=[v_i,v_j,v_k]} = \frac{u_ih_i^{\tau}}{|h_i^{\tau}|^2}
+\frac{u_jh_j^{\tau}}{|h_j^{\tau}|^2}+\frac{u_kh_k^{\tau}}{|h_k^{\tau}|^2}.
\end{equation}
The discrete gradient operator is defined as
\begin{align*}
D : \quad \mathbb{R}^{\mathrm{N}_v} & \longrightarrow  
\mathbb{R}^{3\times\mathrm{N}_{\tau}} \\
u  & \longmapsto   \left(\nabla \mathfrak{u}|_{\tau_1},\ldots,
\nabla \mathfrak{u}|_{\tau_{\mathrm{N}_\tau}}\right).
\end{align*}
We denote by $D_i u$ the $i$th column of $Du$, i.e., 
\begin{equation}\label{D}
D_i \in \mathbb{R}^{3 \times \mathrm{N}_v}, \quad 
D_iu = \nabla \mathfrak{u}|_{\tau_i}.
\end{equation}
Suppose $\tau_i=[v_{i_1},v_{i_2},v_{i_3}]$. By
(\ref{BasisonMesh}), the $i_1$th column, the $i_2$th column and 
the $i_3$th column of $D_i$ are $\frac{h_{i_1}^{\tau_i}}{|h_{i_1}^{\tau}|^2}$, 
$\frac{h_{i_2}^{\tau_i}}{|h_{i_2}^{\tau}|^2}$,
$\frac{h_{i_3}^{\tau_i}}{|h_{i_3}^{\tau}|^2}$, respectively, and the other 
entries of $D_i$ are zero.

\section{Models and algorithm}\label{sec3}

We consider the restoration of an image corrupted by impulse noise:
$$
f = \underline{u} + n,
$$
where $f$ is the observed image, $\underline{u}$ is the original 
image, and $n$ is the impulse noise. The impulse noise corrupts
a portion of elements of $\underline{u}$ while keeping the other
elements unaffected. That is, many elements of $\underline{u}-f$
are zero. Suppose $\underline{\mathfrak{u}}$ is the linear interpolation 
function of $\underline{u}$ defined in (\ref{imageu}). Like 2D images,
we know that the norms of the gradient $|\nabla \underline{\mathfrak{u}}|$ 
are sparse. Therefore, we can use the idea of sparse reconstruction
to restore the original image.

We propose the following L$_p$TV model:
\begin{equation}\label{clptv}
\min_{\mathfrak{u} \in \mathcal{L}(\triangle)} \lambda 
\sum_{j \in I_{v}} |u_j-f_j|^p +
\int_{\triangle} |\nabla \mathfrak{u}| \mathrm{\ d}x,
\end{equation}
where $\lambda >0$ is a parameter and $0<p<1$.
With the symbols introduced in Section \ref{sec_tri}, the discrete 
version of (\ref{clptv}) is
\begin{equation}\label{tvp}
\min_{u\in \mathbb{R}^{\mathrm{N}_v}} \mathcal{F}(u):=
\lambda \sum_{j \in I_{v}} |u_j-f_j|^p +
\sum_{i \in I_{\tau}}|\tau_i| \|D_iu\|,
\end{equation}
where $D_i$ is defined in (\ref{D}). 
Obviously, $\mathcal{F}(u)$ is coercive. 
Hence, the solution of (\ref{tvp}) always exists.

\subsection{Lower bound theory}

\begin{definition}\label{sup}
For any $u \in\mathbb{R}^{\mathrm{N}_v}$, we define the support 
of the data fitting term as 
$$
{\rm supp}(u)=\{j \in I_{v} :u_{j}-f_{j} \neq 0\}.
$$
\end{definition}

We have the following lower bound theory for L$_p$TV model on triangle meshes.

\begin{theorem}\label{lowerbound}
For any local minimizer $u^{*}$ of $\mathcal{F}(u)$ in (\ref{tvp}),
there exists a constant $\theta > 0$ such that for all $j \in I_v$,
if $|u_j-f_j|\neq 0$, then
$$
|u^*_j-f_j| > \theta.
$$
\end{theorem}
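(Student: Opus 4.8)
The plan is to exploit the well-known fact that the $\ell_p$ penalty with $0<p<1$ has an unbounded derivative as its argument tends to $0^+$, so a local minimizer cannot have a data-fitting residual that is nonzero but arbitrarily small; the bounded curvature of the TV term cannot compensate for it. Concretely, fix a local minimizer $u^*$ and an index $j\in I_v$ with $u^*_j-f_j\neq 0$. I would consider the one-dimensional perturbation $u(t)=u^*+t e_j$ and study $g(t):=\mathcal{F}(u(t))$ for $t$ near $0$. The term $\sum_{i\in I_\tau}|\tau_i|\|D_i u\|$ is, as a function of $t$, a finite sum of norms of affine functions of $t$; each such summand is convex and Lipschitz in $t$ with Lipschitz constant bounded by $|\tau_i|\,\|D_i e_j\|$, so the whole TV part has a one-sided directional derivative at $t=0$ bounded in absolute value by $L_j:=\sum_{i\in I_\tau}|\tau_i|\,\|D_i e_j\|$, a constant depending only on the mesh. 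Meanwhile, writing $a:=u^*_j-f_j\neq 0$, the data term contributes $\lambda|a+t|^p$ plus terms not involving $t$; near $t=0$ this is differentiable with derivative $\lambda p\,\sign(a)\,|a|^{p-1}$.

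The key step is the first-order optimality comparison: since $u^*$ is a local minimizer, moving in the direction that decreases $|a+t|^p$ (i.e.\ $t$ of sign $-\sign(a)$) cannot decrease $\mathcal{F}$ to first order. Hence the decrease $\lambda p\,|a|^{p-1}$ coming from the smooth data term must be no larger than the largest possible increase $L_j$ from the TV term, giving $\lambda p\,|a|^{p-1}\le L_j$, i.e.\ $|u^*_j-f_j|=|a|\ge (\lambda p/L_j)^{1/(1-p)}$. (If $L_j=0$ the inequality is even easier: $|a|$ cannot be a stationary point unless $|a|^{p-1}=0$, impossible, so that case needs only a trivial separate remark — or one observes $D e_j\neq 0$ for every interior vertex anyway.) Taking $\theta:=\min_{j\in I_v,\,L_j>0}\tfrac12(\lambda p/L_j)^{1/(1-p)}>0$, which is a finite positive constant since $I_v$ is finite, yields the claim for all $j$ simultaneously.

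The main obstacle is making the directional-derivative bound of the TV term fully rigorous despite its nonsmoothness: each summand $\psi_i(t):=|\tau_i|\,\|D_i u^* + t D_i e_j\|$ is convex, so its right and left derivatives at $0$ exist and lie in $[-|\tau_i|\|D_ie_j\|,\ |\tau_i|\|D_ie_j\|]$; summing, the one-sided derivatives of the TV part at $t=0$ are finite and bounded by $L_j$. Combining this with the exact derivative of $\lambda|a+t|^p$ and the stationarity of $g$ at $t=0$ (which holds because $t=0$ is a local minimum of $g$, as $u^*$ is a local minimizer of $\mathcal F$) gives the estimate. A clean way to package this is via the subdifferential calculus recalled in Appendix~\ref{apsd}: $0\in\partial\mathcal{F}(u^*)$ forces $\lambda p\,\sign(u^*_j-f_j)\,|u^*_j-f_j|^{p-1}\in -\,\partial_{u_j}\big(\sum_i|\tau_i|\|D_iu\|\big)(u^*)$, and the latter set is contained in $[-L_j,L_j]$; taking absolute values finishes the proof. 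I would choose whichever of the two presentations (elementary one-dimensional perturbation versus subdifferential inclusion) is more consistent with the notation already set up in the appendix.
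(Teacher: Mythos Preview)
Your argument is correct, and the underlying mechanism is the same as the paper's: first-order optimality at $u^*$ forces the (finite) subgradient of the TV part to balance the term $\lambda p\,|u^*_j-f_j|^{p-1}$, which blows up as the residual shrinks. The execution, however, is genuinely more elementary than the paper's. The paper first restricts $\mathcal F$ to the affine set $C=\{v:v_k=f_k\ \forall k\notin\supp(u^*)\}$ in order to sidestep the nondifferentiability of $|\cdot|^p$ at the origin, writes the optimality condition on the tangent space $T_C$, and then manufactures test directions $v^{\bar j}\in T_C$ by solving an auxiliary norm-minimization problem with sign constraints; only after this does it read off the bound $\lambda p\,|u^*_{\bar j}-f_{\bar j}|^{p-1}\le \mu\sum_{i}|\tau_i|\,\|D_i\|$ with $\mu=\max_j\|v^j\|$. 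You avoid all of this by perturbing along $e_j$ directly: since only the $j$th coordinate moves, the potentially nonsmooth terms $|u^*_k-f_k|^p$ with $u^*_k=f_k$ stay constant in $t$ and never enter the computation, so the restriction to $C$ is unnecessary. In fact the paper's auxiliary problem has optimal solution $\pm e_j$ (it is feasible and has norm $1$, while the constraint $|v_j|=1$ forces $\|v\|\ge 1$), so after the detour the paper is effectively testing along the same direction you chose at the outset. Your constant $L_j=\sum_i|\tau_i|\|D_ie_j\|$ is also sharper than the paper's $\sum_i|\tau_i|\|D_i\|$, since only triangles in $\sta(v_j)$ contribute to $L_j$; and because every vertex lies in at least one nondegenerate triangle, $L_j>0$ automatically, so your caveat about $L_j=0$ can be dropped.
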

\begin{proof}
For the local minimizer $u^*$ of $\mathcal{F}(u)$, define the 
following function
$$
\widehat{\mathcal{F}}(u):=
\lambda \sum_{j \in {\rm supp}(u^*)} |u_j-f_j|^p +
\sum_{i \in I_{\tau}}|\tau_i| \|D_iu\|,  
$$
and the affine set
$$
C:=\{v \in  \mathbb{R}^{\mathrm{N}_v}:v_j=f_j 
\enspace \forall j \in I_v\backslash {\rm supp}(u^*)\}.
$$
It is easy to find that
$$
u \in C \quad \Rightarrow \quad \widehat{\mathcal{F}}(u)=\mathcal{F}(u).
$$
Because $u^*$ is a local minimizer of $\mathcal{F}(u)$, 
$u^*$ is a local minimizer of $ \widehat{\mathcal{F}}(u)$ over $C$. 
The corresponding tangent cone of $C$ is
$$
T_C:=\{v \in  \mathbb{R}^{\mathrm{N}_v}:v_j=0 \enspace 
\forall j \in I_v\backslash {\rm supp}(u^*)\}.
$$
By noticing that $T_C$ is a linear space, the first order optimality 
condition tells us that there exists 
$\hat{v} \in \partial \widehat{\mathcal{F}}(u^*)$ such that
\begin{equation}\label{optc}
\langle \hat{v} ,u \rangle=0 \quad \forall u \in T_C.
\end{equation}

By \cite[Exercise 8.8]{rockafellar2009variational} and
\cite[Corollary 10.9]{rockafellar2009variational}, we have 
\begin{align*}
\partial\widehat{\mathcal{F}}(u^*)=&\, \lambda p 
\sum_{j \in {\rm supp}(u^*)}  |u_j^*-f_j|^{p-1}
\sign(u_j^*-f_j)e_j + \partial \left(
\sum_{i \in I_{\tau}}|\tau_i| \|D_iu^*\|\right) \\
=&\, \lambda p \sum_{j \in {\rm supp}(u^*)}  
|u_j^*-f_j|^{p-1}\sign(u_j^*-f_j)e_j + \sum_{i \in I_{\tau}}\partial \big(
|\tau_i| \|D_iu^*\|\big) \\
\subseteq&\, \lambda p \sum_{j \in {\rm supp}(u^*)}  
|u_j^*-f_j|^{p-1}\sign(u_j^*-f_j)e_j + \sum_{i \in I_{\tau}} 
\big\{|\tau_i| D_i^\top w_i: \|w_i\| \leq 1, w_i \in \mathbb{R}^3 \big\}. 
\end{align*}
By (\ref{optc}), there exists $\bar{w}_i \in \mathbb{R}^3$ 
satisfying $ \|\bar{w}_i\| \leq 1 \, \forall i \in I_{\tau} $ 
such that for any $ u \in T_C$, we have
\begin{equation}\label{proleq}
\lambda p \sum_{j \in {\rm supp}(u^*)}  |u_j^*-f_j|^{p-1}
\sign(u_j^*-f_j)u_j = - \sum_{i \in I_{\tau}} |\tau_i|\langle 
\bar{w}_i,D_iu \rangle \, \leq \sum_{i \in I_{\tau}} |\tau_i|\|D_i\|\|u\|.
\end{equation}
For every $j \in {\rm supp}(u^*)$, let $ v^j \in \mathbb{R}^{\mathrm{N}_v}$
be the solution to the problem
\begin{equation}\label{prob}
\begin{split}
\min_{v\in \mathbb{R}^{\mathrm{N}_v}} \quad & \|v\| \\
\st \quad & v \in T_C ,\\
          & v_j=\sign(u_j^*-f_j), \\
          & \sign(u_j^*-f_k)v_k \geqslant 0 \quad \forall 
          k \in {\rm supp}(u^*)\backslash\{j\}.
\end{split}  
\end{equation}
Since the objective function represents the distance between the 
origin and the feasible set, and noticing that the feasible set is 
closed, it follows that (\ref{prob}) 
is solvable. Define
$$
\mu=\max_{j\in {\rm supp}(u^*)} \{\|v^j\|\} \quad 
$$
For a fixed $\bar{j} \in {\rm supp}(u^*)$, letting 
$u= v^{\bar{j}}$ that satisfies (\ref{prob}) and substituting 
it into (\ref{proleq}), we have
\begin{eqnarray*}
\lambda p 
|u_{\bar{j}}^*-f_{\bar{j}}|^{p-1} &
=&\lambda p 
|u_{\bar{j}}^*-f_{\bar{j}}|^{p-1}
\sign(u_{\bar{j}}^*-f_{\bar{j}})v_{\bar{j}}^{\bar{j}} \\
&\leq&\lambda p \sum_{j \in {\rm supp}(u^*)}
|u_j^*-f_j|^{p-1}
\sign(u_j^*-f_j)v_j^{\bar{j}} \\
&\overset{(\ref{proleq})}{\leq}& \sum_{i \in I_{\tau}} 
|\tau_i|\|D_i\|\|v^{\bar{j}}\| \leq \mu\sum_{i \in I_{\tau}}
|\tau_i|\|D_i\|.
\end{eqnarray*}
So for every $j \in {\rm supp}(u^*)$, we have
$$
|u_j^*-f_j|^{p-1} \leq \dfrac{\mu\sum_{i \in I_{\tau}} 
|\tau_i|\|D_i\|}{\lambda p }.
$$
Define
\begin{equation}
\theta:=\left(\dfrac{\mu\sum_{i \in I_{\tau}} |\tau_i|
\|D_i\|}{\lambda p  }\right)^{\frac{1}{p-1}}.
\end{equation}
Then, $|u^*_j-f_j| > \theta$ for every $j \in {\rm supp}(u^*)$.
\end{proof}

\begin{corollary}\label{corlow}
Suppose $u^{*}$ is a local minimizer of $\mathcal{F}(u)$ in (\ref{tvp}). 
Let $\theta$ be the lower bound defined in Theorem \ref{lowerbound}. 
Suppose $\hat{u}$ is a point close to $u^*$ satisfying 
$|u_j^*-\hat{u}_j|<\theta \quad \forall j \in I_v$. If $|\hat{u}_j-f_j|=0$  
for some $j \in I_v$, then $ |u_j^*-f_j|=0$.
\end{corollary}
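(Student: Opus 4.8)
The plan is to prove the contrapositive: assuming $|u_j^*-f_j|\neq 0$ for the index $j$ in question, I will derive a contradiction with the closeness hypothesis on $\hat{u}$.

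First I would observe that if $|u_j^*-f_j|\neq 0$, then by Definition \ref{sup} the index $j$ belongs to $\mathrm{supp}(u^*)$, so Theorem \ref{lowerbound} applies to this very index and yields the strict bound $|u_j^*-f_j|>\theta$, where $\theta$ is the same constant named in the statement of the corollary. Next I would invoke the hypothesis $|\hat{u}_j-f_j|=0$, i.e.\ $\hat{u}_j=f_j$, to rewrite $|u_j^*-\hat{u}_j|=|u_j^*-f_j|$, and then combine this identity with the previous bound to conclude $|u_j^*-\hat{u}_j|>\theta$. This directly contradicts the standing assumption $|u_j^*-\hat{u}_j|<\theta$ for all $j\in I_v$. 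Hence the assumption $|u_j^*-f_j|\neq 0$ is untenable, and therefore $|u_j^*-f_j|=0$, as claimed.

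I do not anticipate any genuine obstacle here: the corollary is an immediate consequence of Theorem \ref{lowerbound} together with the triangle inequality (more precisely, the exact identity $|u_j^*-\hat{u}_j|=|u_j^*-f_j|$ obtained when $\hat{u}_j=f_j$). The only point deserving a moment's care is notational — verifying that the $\theta$ appearing in the corollary's hypothesis is exactly the lower bound constructed in Theorem \ref{lowerbound}, so that the strict inequalities ``$>\theta$'' and ``$<\theta$'' genuinely conflict. No additional estimates, case analysis, or optimality conditions are required beyond what Theorem \ref{lowerbound} already supplies.
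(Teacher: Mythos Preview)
Your proposal is correct and essentially the same as the paper's proof: the paper argues directly via the triangle inequality $|u_j^*-f_j|\leq |u_j^*-\hat{u}_j|+|\hat{u}_j-f_j|<\theta$ and then invokes Theorem~\ref{lowerbound}, while you phrase the identical logic as a contrapositive. No substantive difference.
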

\begin{proof}
If $|\hat{u}_j-f_j|=0$, by the triangle inequality, we have
$$
|u_j^*-f_j| \leq |u_j^*-\hat{u}_j|+|\hat{u}_j-f_j| < \theta.
$$
Then by Theorem \ref{lowerbound}, we have $|u_j^*-f_j|=0$.
\end{proof}

\subsection{Algorithm}

In this section, we propose an algorithm based on the theories given 
in the previous section. Suppose we use an iterative algorithm to find 
a minimizer $u^*$ of (\ref{tvp}) and the iterate sequence is $u^k$.
By Corollary \ref{corlow}, if $\lim_{k\rightarrow \infty} u^k=u^*$,
there is a $k_0$ such that
$$
\mbox{ if } \, |u_j^k-f_j|=0 \mbox{ for some } k \geq k_0 
\mbox{ and some } j \in I_{v} \Rightarrow |u_j^*-f_j|=0.
$$
Motivated by this observation, we consider solving $u^{k+1}$ by
\begin{equation}\label{equ_alg}
\left\{
\begin{aligned}
\min_{u \in \mathbb{R}^{\mathrm{N}_v}} \quad & \lambda 
\sum_{j \in {\rm supp}(u^k)} |u_j-f_j|^p + 
\sum_{i \in I_{\tau}}|\tau_i| \|D_iu\|, \\
\st \quad &  u_j=f_j  \quad \forall j \in I_{v} 
\setminus \sup(u^k),
\end{aligned} \right.
\end{equation}
where $\sup(\cdot)$ is defined in Definition \ref{sup}. By considering
the reality of computation, we treat $|u_j^k-f_j|$ as zero if it is small
enough. Specifically, we define 
\begin{equation*}\label{supc}
\sup^{\varepsilon}(u) = \left\{j \in I_{v}: |u_j^k-f_j| > 
\varepsilon \right\},
\end{equation*}
and
$$
\Gamma^k=\sup^{\varepsilon}(u^k), \quad \Gamma_c^k=I_v \setminus 
\sup^{\varepsilon}(u^k),
$$
and change (\ref{equ_alg}) into 
\begin{equation}\label{equ_alg1}
\left\{
\begin{aligned}
\min_{u \in \mathbb{R}^{\mathrm{N}_v}} \quad & 
\mathcal{F}_k(u) := \lambda \sum_{j \in \Gamma^k}  |u_j-f_j|^p 
+ \sum_{i \in I_{\tau}}|\tau_i| \|D_iu\|, \\
\st \quad &  u_j=f_j  \quad \forall j \in \Gamma_c^k.
\end{aligned} \right.
\end{equation} 
Directly solving (\ref{equ_alg1}) is difficult. The linearization 
is a common technique for nonlinear optimization. That is, the 
first-order Taylor approximation of some term around $u^k$, is 
employed to obtain an approximate objective function which is easy to 
solve. Specifically, for all $j \in \Gamma^k$, we have
\begin{equation}\label{linear-lp}
|u_j-f_j|^p \approx |u_j^k-f_j|^p + p|u_j^k-f_j|^{p-1}
\left(|u_j-f_j|- |u_j^k-f_j| \right).
\end{equation}
For convenience, we define
\begin{equation}\label{sigwei}
w^k_j = p|u_j^k-f_j|^{p-1} \quad \forall j \in \Gamma^k.
\end{equation}
Since the function $x \mapsto x^p$ is concave on $[0,+\infty)$,
it follows that for all $j \in \Gamma^k$,
\begin{equation}\label{linear-cave}
|u_j-f_j|^p\leq w^k_j|u_j-f_j|+(|u_j^k-f_j|^p - w^k_j|u_j^k-f_j|).
\end{equation}
Substituting (\ref{linear-lp}) into (\ref{equ_alg1}), deleting some 
constants and adding a proximal term, we obtain the following
proximal linearization version of (\ref{equ_alg1}):
\begin{equation}\label{equ_lialg2}
\left\{
\begin{aligned}
\min_{u \in \mathbb{R}^{\mathrm{N}_v}} \quad & 
\mathcal{H}_k(u) := & & \lambda \sum_{j \in \Gamma^k} 
w_j^k|u_j-f_j| + \sum_{i \in I_{\tau}}|\tau_i| \|D_iu\| 
+\frac{\rho}{2}\|u-u^k\|^2, \\
\st \quad & u_j=f_j   & & \forall j \in \Gamma_c^k.
\end{aligned} \right.
\end{equation}
The proximal term $\frac{\rho}{2}\|u - u^k\|^2$ is to increase 
the robustness of the nonconvex algorithm and is helpful in the 
global convergence. By (\ref{linear-cave}), we can set $\rho$ to 
be any positive scalar.

The whole procedure of the \emph{proximal linearization method}
(PLM) is summarized in Algorithm \ref{alg}. Algorithm \ref{alg} can 
be extended to multichannel (e.g., color) image denoising.

\begin{algorithm}[H]
\caption{proximal linearization method (PLM) for 
solving (\ref{tvp})}\label{alg}
\small
\SetAlgoLined
\DontPrintSemicolon
\SetKw{End}{end function}
\SetKwInput{Ini}{Initialize}
\KwIn{$\triangle,\lambda,\varepsilon,f$ }
\Ini{$u^0, k=0$}
\Repeat{}{
Update $\Gamma^k,\Gamma^k_c$ \;
Update $w^{k+1}_j, j \in \Gamma^k$ by (\ref{sigwei}) \;
Solve $u^{k+1}$ by (\ref{equ_lialg2}) \;
$k\leftarrow k+1$
}( termination criteria met)
\end{algorithm}

Next, we introduce how to solve (\ref{equ_lialg2}) by the alternating
direction method of multipliers (ADMM).

\subsubsection{Algorithm implementation}

We reformulate $\mathcal{H}_k(u)$ in (\ref{equ_lialg2}). The variable 
$u$ is replaced by $v$ to avoid confusion with the outer loops. We 
introduce two new variables: $y=(y_j)$, where $y_j \in \mathbb{R}$ 
and $j \in \Gamma^k$, and $z=(z_i)$, where $z_i \in \mathbb{R}^3$ and 
$i \in I_\tau$. Then (\ref{equ_lialg2}) is reformulated as
\begin{equation*}
\begin{split}
\min_{v,y,z} \quad & \lambda \sum_{j \in \Gamma^k} 
w_j^k|y_j| + \sum_{i \in I_{\tau}}|\tau_i| \|z_i\| 
+\frac{\rho}{2}\|v-u^k\|^2, \\
\st \quad & y_j=v_j-f_j \quad \forall j \in \Gamma^k ,\\
& z_i=D_iv \quad \forall i \in I_\tau, \\
& v_j=f_j \quad \forall j \in \Gamma_c^k.
\end{split}  
\end{equation*}
The augmented Lagrangian function for the above problem is defined as
\begin{align*}
\mathcal{L}(v,y,z;\eta,\mu) = & \lambda \sum_{j \in \Gamma^k} 
w_j^k|y_j| + \sum_{i \in I_{\tau}}|\tau_i| \|z_i\| 
+\frac{\rho}{2}\|v-u^k\|^2 \\
& + \sum_{j \in \Gamma^k}\eta_j(v_j-f_j-y_j)+ \sum_{j \in \Gamma_c^k}
\eta_j(v_j-f_j) + \sum_{i \in I_{\tau}}|\tau_i|\langle 
\mu_i,D_iv-z_i \rangle \\
& + \frac{\beta_1}{2}\sum_{j \in \Gamma^k} [y_j-(v_j-f_j)]^2+ 
 \frac{\beta_1}{2}\sum_{j \in \Gamma_c^k} (v_j-f_j)^2+ 
 \frac{\beta_2}{2}\sum_{i\in I_\tau}|\tau_i| \|z_i- D_i v\|^2,
\end{align*}
where $\beta_1,\beta_2 > 0, \eta_j \in \mathbb{R}, j \in I_v$ and 
$\mu_i \in \mathbb{R}^3, i \in I_\tau$ are Lagrange multipliers.
Applying the ADMM yields Algorithm \ref{admm}.

\begin{algorithm}[H]
\caption{ADMM for solving  (\ref{equ_lialg2})}\label{admm}
\small
\SetAlgoLined
\DontPrintSemicolon
\SetKw{End}{end function}
\SetKwInput{Ini}{Initialize}
\KwIn{$u^k,\lambda,\rho, \beta_1, \beta_2$ }
\Ini{$v^0=u^k, \eta^0=0, \mu^0=0, t=0$}
\Repeat{}{
Compute $(y^{t+1},z^{t+1})$ by solving
\begin{equation}\label{sub_yz}
(y^{t+1},z^{t+1}) \in \arg \min_{y,z}\mathcal{L}(v^{t},
y,z;\eta^t,\mu^t)
\end{equation}\;  \vspace*{-.4cm}
Compute $v^{t+1}$ by solving
\begin{equation}\label{sub_v}
v^{t+1} \in \arg \min_{v}\mathcal{L}(v,y^{t+1},z^{t+1};
\eta^t,\mu^t)
\end{equation}\;  \vspace*{-.4cm}
Update $\eta^{t+1}_j = \begin{cases}
\eta^{t}_j+ \beta_1 (v_j^{t+1}-f_j-y_j^{t+1})
\quad \forall j \in \Gamma^k    \\
\eta^{t}_j+ \beta_1 (v_j^{t+1}-f_j)\quad \forall 
j \in \Gamma_c^k    
\end{cases}$\;
Update $\mu_i^{t+1}=\mu_i^{t}+ \beta_2(D_iv^{t+1}-z_i^{t+1})
\quad \forall i \in I_\tau$ \;
$t\leftarrow t+1$
}( termination criteria met)
\end{algorithm}

The two subproblems in the above algorithm are calculated as follows.
\begin{enumerate}
\item The $(y,z)$-subproblem (\ref{sub_yz}): we can simplify 
(\ref{sub_yz}) as
\begin{equation*}
\begin{aligned}
(y^{t+1},z^{t+1}) \in \arg \min_{y,z} \biggl\{ &\sum_{j \in \Gamma^k}
\left(   \lambda w_j^k|y_j|+\dfrac{\beta_1}{2} \left|y_j- \left( v_j^t-f_j
+\dfrac{\eta_j^t}{\beta_1} \right) \right|^2 \right)    \\ 
+ & \sum_{i \in I_{\tau}}|\tau_i| \left(\|z_i\|+\dfrac{\beta_2}{2} 
\left\|z_i-\left(  D_iv^t+\dfrac{\mu_i^t}{\beta_2}\right)  
\right\|^2   \right)  \biggr\}.
\end{aligned}
\end{equation*}
The solution of above problem is given by the following one-dimensional 
and three-dimensional shinkage:
\begin{equation*}
\begin{aligned}
y_j^{t+1} =& \max \left\{\left|v^t_j-f_j+\frac{\eta^t_j}{\beta_1}\right|
-\frac{\lambda w^k_j}{\beta_1},0 \right\}\circ \sign\left(v^t_j-f_j+
\frac{\eta^t_j}{\beta_1}\right)
\quad \forall j \in \Gamma^k;   \\
z_i^{t+1} =& \max \left\{\left\|D_iv^t + \frac{\mu_i^t}{\beta_2}\right \|
-\frac{1}{\beta_2}, 0 \right\}\frac{D_iv^t
	+ \frac{\mu_i^t}{\beta_2}}{\big\|D_iv^t + \frac{\mu_i^t}{\beta_2} \big\|}
 \quad \forall i \in I_\tau,
\end{aligned}
\end{equation*}
where the convention $0 \cdot (0/0)=0$ is followed, and $\circ$  denotes 
the pointwise product.
\item  The $v$-subproblem (\ref{sub_v}): we introduce $\tilde{y}=(y_j)$ 
with $y_j=0 \, \forall j \in \Gamma_c^k$, and define 
$\overline{y}^{t+1}=(y^{t+1},\tilde{y}^{t+1})$. 
Then we can simplify (\ref{sub_v}) as
\begin{equation*}
\begin{aligned}
v^{t+1} \in \arg \min_{v} \bigg\{ & \frac{\rho}{2}\|v-u^k\|^2
+ \sum_{j \in I_v}\eta_j^t(v_j-f_j-\overline{y}_j^{t+1})+ 
\sum_{i \in I_{\tau}}|\tau_i|\langle \mu_i^t,D_iv-z_i^{t+1} \rangle \\
& + \frac{\beta_1}{2}\sum_{j \in I_v} [\overline{y}_j^{t+1}-(v_j-f_j)]^2
+ \frac{\beta_2}{2}\sum_{i\in I_\tau}|\tau_i| \|z_i^{t+1}- D_i v\|^2 \bigg\}.
\end{aligned}
\end{equation*}
This is a least squares problem and the corresponding normal equation is
\begin{equation*}
\left(\beta_2\sum_{i \in I_\tau}|\tau_i|D_i^\top D_i+ (\rho+\beta_1 )I \right)v
=\sum_{i \in I_\tau}|\tau_i|D_i^\top(\beta_2z_i^{t+1}-\mu_i^t)+
\beta_1(\overline{y}^{t+1}+f)-\eta^t+\rho u^k
\end{equation*}
This is a sparse linear system and can be solved by various 
well-developed numerical packages.

\end{enumerate}

\section{Convergence analysis}\label{sec4}

In this section, we establish the global convergence of $\{u^k\}$,
generated by Algorithm \ref{alg}. Our proof is based on the 
Kurdyka-{\L}ojasiewicz (KL) property of $\mathcal{F}(u)$. For examples 
of KL functions. For the reader's convenience, we provide
some related facts and results on KL functions in Appendix \ref{apKL}.

For the convergence analysis of nonconvex optimization, the KL 
property is pretty significant, which has drawn a great many of 
attention in recent years. For the development of the application 
of KL property in optimization theory, see \cite{bolte2007lojasiewicz,
bolte2007clarke,attouch2009convergence,	attouch2010proximal,
attouch2013convergence,bolte2014proximal} and references therein. 
One can refer to \cite{attouch2010proximal,attouch2013convergence,bolte2014proximal},
\cite[Proposition 6]{ochs2015iteratively}, \cite[section 3.3]{zeng2019iterative}
for examples of KL functions. For this paper, $\mathcal{F}(u)$ is a KL function.

First, by (\ref{equ_lialg2}), we have the following relationship:
\begin{equation} \label{subset}
\Gamma^{k+1} \subseteq {\rm supp}(u^{k+1}) \subseteq \Gamma^k 
\subseteq {\rm supp}(u^k).
\end{equation}
Hence, $\{\# \Gamma^k\}$ is a non-increasing sequence. Since $\# \Gamma^k 
\in \{0,1,\ldots,N_v\}$, it follows that $\{\Gamma^k\}$ will converge within 
finite steps, i.e., \begin{equation}\label{equ_unchangeconstaint}
\exists K, \ \mbox{ such that }\  \Gamma^k =\Gamma^K \quad \forall k \geq K.
\end{equation}
We denote $\Gamma = \Gamma^K$ and $\Gamma_c =I_{\tau} \setminus \Gamma^K$.
Then, when $k \geq K$, (\ref{equ_lialg2}) becomes:
\begin{equation}\label{equ_lialg3}
\left\{
\begin{aligned}
\min_{u \in \mathbb{R}^{\mathrm{N}_v}} \quad & 
\overline{\mathcal{H}}_k(u) := & & \lambda \sum_{j \in \Gamma} 
w_j^k|u_j-f_j| + \sum_{i \in I_{\tau}}|\tau_i| \|D_iu\| 
+\frac{\rho}{2}\|u-u^k\|^2, \\
\st \quad & u_j=f_j   & & \forall j \in \Gamma_c.
\end{aligned} \right.
\end{equation}
In order to eliminate the constraint of (\ref{equ_lialg3}) for the 
convenience of convergence analysis, we define
$$
\mathscr{C}:=\{u \in \mathbb{R}^{\mathrm{N}_v}: u_j=f_j  \quad 
\forall j \in \Gamma_c\},
$$
and the corresponding indicator function of $\mathscr{C}$ as 
$\delta_{\mathscr{C}}$, where $\delta_{\mathscr{C}}:
\mathbb{R}^{N_v} \rightarrow (-\infty,+\infty]$ is defined by
$$
\delta_{\mathscr{C}}(u)=\begin{cases}
0 & \mbox{if } u \in \mathscr{C}, \\
+\infty & \mbox{otherwise}.
\end{cases}
$$
Then
$$
\partial \big( \delta_{\mathscr{C}}(u) \big)=N_{\mathscr{C}}(u)=
\begin{cases}
\left\{\sum_{j \in \Gamma_c} b_je_j: b_j \in \mathbb{R}\right\}
& \mbox{if } u \in \mathscr{C}, \\
\emptyset & \mbox{otherwise},
\end{cases}
$$
where $N_{\mathscr{C}}(u)$ is the normal cone of $\mathscr{C}$ at $u$.
So (\ref{equ_lialg3}) is equivalent to the following problem:
\begin{equation}\label{pro_linlast}
\min_{u \in \mathbb{R}^{\mathrm{N}_v}} \quad 
\overline{\mathcal{H}}^{\delta}_k(u):= \overline{\mathcal{H}}_k(u)
+ \delta_{\mathscr{C}}(u).
\end{equation}
When $k \geq K$, $u^k \in \mathscr{C}$ and $u^{k+1}$ solves 
$\overline{\mathcal{H}}^{\delta}_k(u)$ in (\ref{pro_linlast}).

In order to enable readers to better understand the proof, we review 
the definition of $\mathcal{F}(u), \mathcal{F}_k(u)$ and $\mathcal{H}_k(u)$:
\begin{equation*}
\begin{aligned}
&\mathcal{F}(u):=
\lambda \sum_{j \in I_{v}} |u_j-f_j|^p +
\sum_{i \in I_{\tau}}|\tau_i| \|D_iu\|, \\
&\mathcal{F}_k(u) := \lambda \sum_{j \in \Gamma^k}  |u_j-f_j|^p 
+ \sum_{i \in I_{\tau}}|\tau_i| \|D_iu\|, \\
&\mathcal{H}_k(u) := \lambda \sum_{j \in \Gamma^k} 
w_j^k|u_j-f_j| + \sum_{i \in I_{\tau}}|\tau_i| \|D_iu\| 
+\frac{\rho}{2}\|u-u^k\|^2.
\end{aligned}
\end{equation*}
By the algorithm and (\ref{subset}), the following relation can be verified 
easily:
\begin{equation}\label{20}
\mathcal{F}_k(u^{k+j})=\mathcal{F}(u^{k+j}) \quad \forall k \geq 0, 
\forall j \geq 1.
\end{equation}

\begin{lemma}\label{lemma_decrease}
Let $\{u^k\}$ be generated by Algorithm \ref{alg}. Then, the objective 
function sequence $\{\mathcal{F}(u^k)\}$ satisfies 
\begin{equation}\label{equ_Fnonincrease}
\frac{\rho}{2}\|u^{k+1}-u^k\|^2 \leq \mathcal{F}(u^k) - 
\mathcal{F}(u^{k+1}) \quad \forall \, k \geq 0,
\end{equation}
and $\{u^k\}$ is bounded.
\end{lemma}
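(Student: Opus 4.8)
The plan is to establish the descent inequality \eqref{equ_Fnonincrease} by exploiting three facts: that $u^{k+1}$ is a minimizer of the proximal-linearized problem \eqref{equ_lialg2}, that the linearization \eqref{linear-cave} over-estimates each term $|u_j-f_j|^p$, and that on the support set $\Gamma^k$ the iterate automatically satisfies the constraint $u^{k+1}_j = f_j$ for $j \in \Gamma_c^k$, together with the inclusion \eqref{subset} which lets us pass between $\mathcal{F}_k$ and $\mathcal{F}$ via \eqref{20}. First I would write down the optimality of $u^{k+1}$ for \eqref{equ_lialg2}, which gives $\mathcal{H}_k(u^{k+1}) \le \mathcal{H}_k(u^k)$. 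Since $u^k$ itself is feasible for \eqref{equ_lialg2} (indeed $u^k_j = f_j$ for $j\in\Gamma_c^k$ because $\Gamma_c^k \subseteq \Gamma_c^{k-1} \subseteq \operatorname{supp}(u^k)^c$, or directly from the previous iteration's constraint), the proximal term at $u^k$ vanishes and we get
\[
\lambda \sum_{j \in \Gamma^k} w_j^k |u^{k+1}_j - f_j| + \sum_{i\in I_\tau} |\tau_i|\,\|D_i u^{k+1}\| + \frac{\rho}{2}\|u^{k+1}-u^k\|^2 \le \lambda \sum_{j \in \Gamma^k} w_j^k |u^k_j - f_j| + \sum_{i\in I_\tau} |\tau_i|\,\|D_i u^k\|.
\]

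Next I would convert this statement about the linearized objective into one about $\mathcal{F}_k$. Applying the concavity bound \eqref{linear-cave} at index $j$ with the point $u^{k+1}$ gives $|u^{k+1}_j - f_j|^p \le w_j^k |u^{k+1}_j - f_j| + (|u^k_j - f_j|^p - w_j^k |u^k_j - f_j|)$ for each $j \in \Gamma^k$; summing over $\Gamma^k$, multiplying by $\lambda$, and adding $\sum_i |\tau_i|\,\|D_i u^{k+1}\|$ to both sides shows
\[
\mathcal{F}_k(u^{k+1}) \le \lambda \sum_{j\in\Gamma^k} w_j^k |u^{k+1}_j - f_j| + \sum_{i\in I_\tau}|\tau_i|\,\|D_i u^{k+1}\| + \lambda \sum_{j\in\Gamma^k}\bigl(|u^k_j-f_j|^p - w_j^k|u^k_j-f_j|\bigr).
\]
Combining this with the optimality inequality above, the $w_j^k|u^{k+1}_j-f_j|$ and $\|D_i u^{k+1}\|$ contributions telescope against the right-hand side, the mismatch terms $\lambda\sum_j(|u^k_j-f_j|^p - w_j^k|u^k_j-f_j|)$ cancel, and what survives is exactly
\[
\mathcal{F}_k(u^{k+1}) + \frac{\rho}{2}\|u^{k+1}-u^k\|^2 \le \lambda \sum_{j\in\Gamma^k}|u^k_j - f_j|^p + \sum_{i\in I_\tau}|\tau_i|\,\|D_i u^k\| = \mathcal{F}_k(u^k).
\]

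To finish, I would invoke \eqref{20} (equivalently the inclusion $\Gamma^k \subseteq \operatorname{supp}(u^k)$ and $\operatorname{supp}(u^{k+1})\subseteq\Gamma^k$ from \eqref{subset}): since $u^{k+1}_j = f_j$ for every $j \notin \Gamma^k$, the omitted data-fitting terms vanish and $\mathcal{F}_k(u^{k+1}) = \mathcal{F}(u^{k+1})$; moreover $\mathcal{F}_k(u^k) = \mathcal{F}(u^k)$ because the indices in $I_v \setminus \Gamma^k$ that are dropped all lie outside $\operatorname{supp}(u^k)$ up to the thresholding tolerance — this is the one place where the distinction between the exact support and $\sup^\varepsilon$ needs a careful word, and I expect it to be the main technical subtlety rather than a deep obstacle; one handles it by noting that for $k\ge 1$ the constraint from the previous step forces $u^k_j = f_j$ on $\Gamma_c^{k-1} \supseteq \Gamma_c^k$, so those terms are genuinely zero. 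Substituting both identities yields \eqref{equ_Fnonincrease}. Boundedness of $\{u^k\}$ then follows immediately: \eqref{equ_Fnonincrease} shows $\mathcal{F}(u^k) \le \mathcal{F}(u^0)$ for all $k$, and since $\mathcal{F}$ is coercive (as already noted after \eqref{tvp}), the sublevel set $\{u : \mathcal{F}(u) \le \mathcal{F}(u^0)\}$ is bounded and contains the whole sequence.
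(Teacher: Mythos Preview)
Your strategy is exactly the paper's: use optimality of $u^{k+1}$ for \eqref{equ_lialg2} to get $\mathcal{H}_k(u^{k+1})\le\mathcal{H}_k(u^k)$, combine with the concavity overestimate \eqref{linear-cave} to drop to $\mathcal{F}_k$, then pass to $\mathcal{F}$ via \eqref{20} and conclude boundedness from coercivity. The algebra of the ``cancellation'' step is correct and mirrors the paper's chain \eqref{22}--\eqref{23}.

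There is one concrete slip. The inclusion you invoke twice, $\Gamma_c^{k}\subseteq\Gamma_c^{k-1}$ (equivalently $\Gamma_c^{k-1}\supseteq\Gamma_c^k$), is reversed: from \eqref{subset} one has $\Gamma^{k}\subseteq\Gamma^{k-1}$, so in fact $\Gamma_c^{k-1}\subseteq\Gamma_c^{k}$. This matters because it is precisely the direction you use to argue that $u^k$ is feasible for \eqref{equ_lialg2}; with the correct inclusion, the constraint from step $k-1$ only forces $u^k_j=f_j$ on the \emph{smaller} set $\Gamma_c^{k-1}$, and any index $j\in\Gamma_c^k\setminus\Gamma_c^{k-1}$ may satisfy $0<|u^k_j-f_j|\le\varepsilon$. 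So the feasibility claim, and with it the comparison $\mathcal{H}_k(u^{k+1})\le\mathcal{H}_k(u^k)$, is not justified by your argument. The paper's own proof uses that same comparison (and the equality $\mathcal{F}_k(u^k)=\mathcal{F}(u^k)$ in \eqref{23}) without addressing this point either, so you are no less rigorous than the paper here --- but the fix you propose for the subtlety you correctly flagged does not work as written. Note that for the final inequality only $\mathcal{F}_k(u^k)\le\mathcal{F}(u^k)$ is needed, which is trivial since the dropped terms are nonnegative; the feasibility of $u^k$ is the genuinely delicate step, and it holds cleanly once $k\ge K$ (where $\Gamma^k$ stabilizes by \eqref{equ_unchangeconstaint}).
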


\begin{proof}
According to (\ref{linear-cave}), we have
\begin{equation}\label{22}
\mathcal{F}_k(u)+\frac{\rho}{2}\|u-u^k\|^2 \leq \mathcal{H}_k(u)+ 
\lambda \sum_{j \in \Gamma^k}(|u^k_j-f_j|^p-w_j^k|u^k_j-f_j|).
\end{equation}
Letting $u=u^k$ yields that
\begin{equation}\label{23}
\begin{aligned}
\mathcal{H}_k(u^k)+ \lambda \sum_{j \in \Gamma^k}(|u^k_j-f_j|^p
-w_j^k|u^k_j-f_j|)=& \lambda \sum_{j \in \Gamma^k}|u^k_j-f_j|^p
+ \sum_{i \in I_{\tau}}|\tau_i| \|D_iu^k\| \\
=& \mathcal{F}(u^k).
\end{aligned}
\end{equation}
It follows that
\begin{equation*}
\begin{aligned}
& \mathcal{F}(u^{k+1})+\frac{\rho}{2}\|u^{k+1}-u^k\|^2 \\
\overset{(\ref{20})}{=}& \mathcal{F}_k(u^{k+1})+\frac{\rho}{2}
\|u^{k+1}-u^k\|^2\\
\overset{(\ref{22})}{\leq}& \mathcal{H}_k(u^{k+1})+ \lambda 
\sum_{j \in \Gamma^k}(|u^k_j-f_j|^p-w_j^k|u^k_j-f_j|) \\
\leq&  \mathcal{H}_k(u^k)+ \lambda \sum_{j \in \Gamma^k}
(|u^k_j-f_j|^p-w_j^k|u^k_j-f_j|) \\
\overset{(\ref{23})}{=}& \mathcal{F}(u^k),
\end{aligned}
\end{equation*}
which proves (\ref{equ_Fnonincrease}).

Combining (\ref{equ_Fnonincrease}) and the fact that $\mathcal{F}(u^k) 
\geq 0$ yields the convergence of $\{\mathcal{F}(u^k) \}$. Since 
$\mathcal{F}(u)$ is coercive, it follows that $\{u^k\}$ is bounded.
\end{proof}

Next, we give a subdifferential lower bound for the iterates gap. Let 
$\{u^k\}$ be generated by Algorithm \ref{alg}, when $k \geq K$, according 
to \cite[Theorem 2.3]{zheng2020globally}, we have the following formula:
\begin{equation}\label{formula}
\partial \left(\lambda \sum_{j \in \Gamma_c} |u^k_j-f_j|^p\right)
=\sum_{j \in \Gamma_c}({\rm ker} \, e_j^\top)^\bot=
\sum_{j \in \Gamma_c}{\rm Im} \, e_j=N_{\mathscr{C}}(u^k).
\end{equation}

\begin{lemma}\label{lemma_subd}
Let $\{u^k\}$ be generated by Algorithm \ref{alg} and $k \geq K$. Then, there 
exists  $\alpha^{k+1} \in \partial \mathcal{F}(u^{k+1})$ such that
\begin{equation}
\|\alpha^{k+1} \| \leq \Lambda \|u^{k+1}-u^{k}\|,
\end{equation}
where $\Lambda \geq 0$ is a constant.
\end{lemma}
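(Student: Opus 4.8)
The plan is to apply Fermat's rule to the convex subproblem solved by $u^{k+1}$, extract from it a subgradient of the TV term together with the sign pattern of the linearized $\ell_1$ term, and then reassemble these ingredients into an element of $\partial\mathcal{F}(u^{k+1})$ that can be estimated coordinatewise against $\|u^{k+1}-u^k\|$. First I would record what $k\ge K$ buys us. By (\ref{subset}), $\Gamma=\Gamma^{k+1}\subseteq{\rm supp}(u^{k+1})\subseteq\Gamma^{k}=\Gamma$, so ${\rm supp}(u^{k+1})=\Gamma$; in particular $u_j^{k+1}\ne f_j$ for every $j\in\Gamma$, and since membership in $\Gamma=\Gamma^{k+1}$ means exactly $|u_j^{k+1}-f_j|>\varepsilon$, we obtain the key separation $|u_j^{k+1}-f_j|>\varepsilon$ for all $j\in\Gamma$; likewise $|u_j^{k}-f_j|>\varepsilon$ for $j\in\Gamma$ because $\Gamma=\Gamma^{k}$, while $u_j^{k+1}=f_j$ for $j\in\Gamma_c$ by the constraint in (\ref{equ_lialg3}).

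Next I would write the first-order optimality condition for $u^{k+1}$. Since $u^{k+1}$ solves the convex problem (\ref{pro_linlast}), $0\in\partial\overline{\mathcal{H}}^{\delta}_k(u^{k+1})$; expanding by the exact convex sum and chain rules, and using that $u_j^{k+1}\ne f_j$ on $\Gamma$ pins the subgradient of $|\cdot-f_j|$ to $\sign(u_j^{k+1}-f_j)$, there exist $w_i\in\mathbb{R}^3$ with $w_i\in\partial\|\cdot\|(D_iu^{k+1})$ (hence $\|w_i\|\le1$) and reals $b_j$, $j\in\Gamma_c$, with
$$
0=\lambda\sum_{j\in\Gamma}w_j^k\sign(u_j^{k+1}-f_j)e_j+\sum_{i\in I_\tau}|\tau_i|D_i^\top w_i+\rho(u^{k+1}-u^k)+\sum_{j\in\Gamma_c}b_je_j .
$$
Because ${\rm supp}(u^{k+1})=\Gamma$, the function $\lambda\sum_{j\in\Gamma}|u_j-f_j|^p$ is $C^1$ near $u^{k+1}$, and by the same subdifferential calculus used in the proof of Theorem~\ref{lowerbound} together with (\ref{formula}), the vector
$$
\alpha^{k+1}:=\lambda p\sum_{j\in\Gamma}|u_j^{k+1}-f_j|^{p-1}\sign(u_j^{k+1}-f_j)e_j+\sum_{i\in I_\tau}|\tau_i|D_i^\top w_i+\sum_{j\in\Gamma_c}c_je_j
$$
lies in $\partial\mathcal{F}(u^{k+1})$ for every choice of reals $c_j$, $j\in\Gamma_c$, where the $w_i$ are the ones produced above.

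Finally I would eliminate the TV term coordinatewise by subtracting the optimality identity from the definition of $\alpha^{k+1}$. For $j\in\Gamma_c$ this gives $\big(\sum_{i}|\tau_i|D_i^\top w_i\big)_j=-\rho(u_j^{k+1}-u_j^k)-b_j$, so the free choice $c_j:=\rho(u_j^{k+1}-u_j^k)+b_j$ forces $\alpha^{k+1}_j=0$. For $j\in\Gamma$ it gives $\big(\sum_{i}|\tau_i|D_i^\top w_i\big)_j=-\lambda w_j^k\sign(u_j^{k+1}-f_j)-\rho(u_j^{k+1}-u_j^k)$, whence, using $w_j^k=p|u_j^k-f_j|^{p-1}$ from (\ref{sigwei}),
$$
\alpha^{k+1}_j=\lambda p\,\sign(u_j^{k+1}-f_j)\big(|u_j^{k+1}-f_j|^{p-1}-|u_j^k-f_j|^{p-1}\big)-\rho(u_j^{k+1}-u_j^k).
$$
This is the one place where the threshold $\varepsilon$ is indispensable: $t\mapsto t^{p-1}$ is only locally Lipschitz away from $0$, but on $[\varepsilon,+\infty)$ its derivative has modulus at most $(1-p)\varepsilon^{p-2}$, and both $|u_j^{k+1}-f_j|$ and $|u_j^k-f_j|$ exceed $\varepsilon$, so the mean value theorem and the reverse triangle inequality yield $\big||u_j^{k+1}-f_j|^{p-1}-|u_j^k-f_j|^{p-1}\big|\le(1-p)\varepsilon^{p-2}|u_j^{k+1}-u_j^k|$ and hence $|\alpha^{k+1}_j|\le(\lambda p(1-p)\varepsilon^{p-2}+\rho)\,|u_j^{k+1}-u_j^k|$. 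Squaring and summing over $j\in\Gamma$ (all remaining coordinates of $\alpha^{k+1}$ being zero) gives $\|\alpha^{k+1}\|\le\Lambda\|u^{k+1}-u^k\|$ with $\Lambda:=\lambda p(1-p)\varepsilon^{p-2}+\rho$. The only genuine obstacle is precisely this Lipschitz bound for $t^{p-1}$, which would fail without the uniform separation of the active residuals from $0$ guaranteed by the support-shrinking rule; everything else is routine bookkeeping with the convex subgradient calculus already invoked earlier in the paper.
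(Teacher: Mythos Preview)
Your proof is correct and follows essentially the same strategy as the paper: apply the first-order optimality condition for (\ref{pro_linlast}), combine it with the subdifferential formula (\ref{subd})/(\ref{formula}) for $\mathcal{F}$ at $u^{k+1}$, and then control the resulting difference $|u_j^{k+1}-f_j|^{p-1}-|u_j^{k}-f_j|^{p-1}$ via the Lipschitz bound for $t\mapsto t^{p-1}$ on $[\varepsilon,\infty)$. The only notable difference is that you keep the estimate coordinatewise ($|\alpha_j^{k+1}|\le(\lambda p(1-p)\varepsilon^{p-2}+\rho)|u_j^{k+1}-u_j^k|$) before passing to the $\ell_2$ norm, which yields the sharper constant $\Lambda=\lambda p(1-p)\varepsilon^{p-2}+\rho$; the paper instead bounds each $|w_j^{k+1}-w_j^k|$ by $p(1-p)\varepsilon^{p-2}\|u^{k+1}-u^k\|$ and sums, picking up an extra factor $N_v$ in $\Lambda$.
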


\begin{proof}
By noting (\ref{formula}), direct calculation gives that
\begin{equation}\label{subd}
\partial\mathcal{F}(u^{k+1})=\, \lambda 
\sum_{j \in \Gamma} w_j^{k+1} \sign(u_j^{k+1}-f_j)e_j + 
N_{\mathscr{C}}(u^{k+1}) + \partial \left(
\sum_{i \in I_{\tau}}|\tau_i| \|D_iu^{k+1}\|\right).
\end{equation}
Since $u^{k+1}$ solves $\overline{\mathcal{H}}^{\delta}_k(u)$ 
in (\ref{pro_linlast}), by the first order optimality condition, we have
\begin{equation*}
0 \in \partial \overline{\mathcal{H}}^{\delta}_k(u^{k+1}) = 
\partial \overline{\mathcal{H}}_k(u^{k+1})
+ \partial \delta_{\mathscr{C}}(u^{k+1}).
\end{equation*}
Then, we have
\begin{equation}\label{27}
-\lambda 
\sum_{j \in \Gamma} w_j^{k} \sign(u_j^{k+1}-f_j)e_j - 
\rho(u^{k+1}-u^{k}) \in \partial \left(
\sum_{i \in I_{\tau}}|\tau_i| \|D_iu^{k+1}\|\right) + 
N_{\mathscr{C}}(u^{k+1}).
\end{equation}
Therefore, combining (\ref{subd}) and (\ref{27}) yields that
$$
\alpha^{k+1}:=\lambda 
\sum_{j \in \Gamma} (w_j^{k+1}-w_j^k) \sign(u_j^{k+1}-f_j)e_j 
- \rho(u^{k+1}-u^{k}) \in \partial\mathcal{F}(u^{k+1}).
$$

By Algorithm \ref{alg}, we have $|u_j^k-f_j| \geq \epsilon$, $\forall j \in 
\Gamma, \forall k \geq K$. Consider the function $x \mapsto 
x^{p-1}$. For any $x_2 > x_1 \geq \epsilon,$ by the mean value 
theorem, there exists $x_0 \in (x_1, x_2)$ such that
$$
|x_2^{p-1}-x_1^{p-1}|=(1-p)x_0^{p-2}|x_2-x_1| \leq (1-p) 
\epsilon^{p-2}|x_2-x_1|.
$$ 
Then, by the definition (\ref{sigwei}), it follows that:

\begin{equation}\label{lipsch}
\begin{aligned}
|w_j^{k+1}-w_j^{k}| &= p\big||u_j^{k+1}-f_j|^{p-1}-|u_j^k-f_j|^{p-1}\big| \\ 
& \leq  p(1-p)\epsilon^{p-2} |u_j^{k+1}-u_j^{k} | \leq p(1-p)\epsilon^{p-2} 
\left\| u^{k+1}-u^{k} \right\|.
\end{aligned}
\end{equation}
Therefore,
\begin{equation*}
\begin{aligned}
\|\alpha^{k+1} \| & \leq  \lambda \sum_{j \in \Gamma}  |w_j^{k+1}-w_j^{k}| 
+ \rho \|u^{k+1}-u^k \| \\
& \overset{(\ref{lipsch})}{\leq} \left(\lambda p(1-p)\epsilon^{p-2} 
N_v + \rho \right) \|u^{k+1}-u^k \|  \\
& =: \Lambda \|u^{k+1}-u^{k}\|,
\end{aligned}
\end{equation*}
where $\Lambda := \lambda p(1-p)\epsilon^{p-2} N_v + \rho$.
\end{proof}

\begin{theorem}
Let $\{u^k\}$ be generated by Algorithm \ref{alg}. Then $\{u^k \}$ converges 
to a critical point $u^*$ of $\mathcal{F}(u)$.
\end{theorem}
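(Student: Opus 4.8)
The plan is to invoke the standard abstract convergence framework for descent methods satisfying the Kurdyka--\L ojasiewicz inequality (see, e.g., \cite[Theorem 2.9]{attouch2013convergence} or \cite[Theorem 1]{bolte2014proximal}), whose hypotheses are exactly the three ingredients assembled so far: a sufficient-decrease estimate, a relative-error (subgradient) bound, and the KL property of the objective. Concretely, Lemma \ref{lemma_decrease} provides the sufficient decrease $\frac{\rho}{2}\|u^{k+1}-u^k\|^2 \le \mathcal{F}(u^k)-\mathcal{F}(u^{k+1})$ (this is the "H1" condition); Lemma \ref{lemma_subd} provides, for $k \ge K$, an element $\alpha^{k+1} \in \partial \mathcal{F}(u^{k+1})$ with $\|\alpha^{k+1}\| \le \Lambda\|u^{k+1}-u^k\|$ (this is "H2"); and $\mathcal{F}$ is a KL function as recorded in the text, being a sum of a semialgebraic term $\sum_j |u_j-f_j|^p$ and the polyhedral-norm composition $\sum_i |\tau_i|\,\|D_i u\|$.

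The key steps, in order, would be: (1) Note that it suffices to analyze the tail of the sequence, i.e., $k \ge K$, since the first $K$ iterates do not affect convergence; on this tail $\Gamma^k \equiv \Gamma$ is fixed. (2) Establish that the set of limit points of $\{u^k\}$ is nonempty (from boundedness in Lemma \ref{lemma_decrease}) and that $\mathcal{F}$ is constant on it, with $\mathcal{F}(u^k)$ decreasing to that common value; here one also uses lower semicontinuity of $\mathcal{F}$ and the fact that $\|u^{k+1}-u^k\| \to 0$, which follows by summing \eqref{equ_Fnonincrease}. (3) Apply the KL inequality at a limit point $\bar u$: on a neighborhood of $\bar u$ there is a concave desingularizing function $\varphi$ with $\varphi'(\mathcal{F}(u)-\mathcal{F}(\bar u))\,\mathrm{dist}(0,\partial\mathcal{F}(u)) \ge 1$. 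Combining this with H1 and H2 in the usual telescoping argument yields $\sum_k \|u^{k+1}-u^k\| < \infty$, so $\{u^k\}$ is Cauchy and hence converges to some $u^*$. (4) Pass to the limit in Lemma \ref{lemma_subd}: since $\alpha^{k+1} \to 0$, $u^{k+1} \to u^*$, and $\partial\mathcal{F}$ has closed graph, conclude $0 \in \partial\mathcal{F}(u^*)$, i.e., $u^*$ is a critical point.

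I expect the main technical obstacle to be the "uniformized KL" bookkeeping in step (3): one must argue that once an iterate enters a suitable KL neighborhood of a limit point it never leaves, which requires carefully choosing the neighborhood radius relative to the (already summable) tail increments and the value gap $\mathcal{F}(u^k)-\mathcal{F}(u^*)$. This is a well-trodden argument, so the cleanest route is to verify explicitly that the triple $(\mathcal{F}, \{u^k\}_{k\ge K}, \{\mathrm{dist}(0,\partial\mathcal{F}(u^k))\})$ meets the hypotheses of the cited abstract theorem and simply invoke it; the only paper-specific checks are that $\mathcal{F}$ is proper, lower semicontinuous, and KL (all already noted), that the decrease constant and error constant are uniform in $k$ (they are: $\rho$ and $\Lambda$ do not depend on $k$, using $|u_j^k-f_j|\ge\varepsilon$ on $\Gamma$ for $k\ge K$), and that $\|u^{k+1}-u^k\|\to 0$. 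A minor additional point to mention is that the constraint $u_j = f_j$ on $\Gamma_c$ is already absorbed into $\mathcal{F}$ via \eqref{formula} and \eqref{20}, so no separate handling of the feasibility set is needed in the limit.
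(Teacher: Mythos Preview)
Your proposal is correct and follows essentially the same route as the paper: verify the sufficient-decrease condition (Lemma~\ref{lemma_decrease}), the relative-error bound (Lemma~\ref{lemma_subd}) for $k\ge K$, and the KL property of $\mathcal{F}$, then invoke \cite[Theorem~2.9]{attouch2013convergence}. The paper's proof is even terser than your sketch---it simply extracts a convergent subsequence from boundedness and continuity of $\mathcal{F}$ and then cites the abstract theorem directly---so your more explicit outline of steps (1)--(4) is, if anything, a fuller version of the same argument.
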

\begin{proof}
Since $\{u^k\}$ is bounded (see Lemma \ref{lemma_decrease}) and 
$\mathcal{F}(u)$ is continuous, there exists a subsequence $\{u^{k_j}\}$ 
and $\hat{u}$ such that 
\begin{equation}
u^{k_j} \rightarrow \hat{u} \, \mbox{ and } \, \mathcal{F}(u^{k_j}) 
\rightarrow \mathcal{F}(\hat{u}), \quad \mbox{as }  j \rightarrow +\infty. 
\end{equation}
Combining (\ref{lemma_decrease}) and (\ref{lemma_subd}), by 
\cite[Theorem 2.9]{attouch2013convergence}, the sequence $\{u^k\}$
converges to $u^* = \hat{u}$ globally, and $u^*$ is a critical 
point of $\mathcal{F}(u)$.
\end{proof}

\section{Experiments}\label{sec5}

In this section, we provide some numerical examples. The proposed 
algorithms are implemented with C++. All experiments are performed on 
Visual Studio 2019 on a desktop computer
(Intel Core i7-8700 CPU @ 3.20GHz, 16G RAM). The peak signal-to-noise 
ratio (PSNR) is used to measure the quality of restored 
images. For grayscale images, the PSNR is defined as follows:
$$
\mbox{PSNR} = 10*\log_{10}\frac{\mathrm{N}_v}{\|u-\underline{u}\|^2}\mbox{dB},
$$
where $\mathrm{N}_v$ is the number of the vertices,  $\underline{u}$ is 
the original image, and $u$ is the restored image. The PSNR of color images 
is defined similarly.

The stopping tolerance of PLM is set as
$$
\frac{\|u^{k+1}-u^{k}\|}{\|u^{k+1}\|}< 10^{-6} \mbox{ or }
\frac{\|\bm{u}^{k+1}-\bm{u}^{k}\|}{\|\bm{u}^{k+1}\|}< 10^{-6}.
$$
The maximum iteration number is set to be 500. Generally, the choice 
of the initial point is crucial in nonconvex optimization. In this 
specific problem, if the initialization is distant from the original 
image, the support of the initial point may significantly differ from 
that of the original image. Considering that L$_1$TV model can be 
viewed as a reliable convex approximation of L$_p$TV model, the solution 
obtained from the L$_1$TV model is likely to remain close to the original 
image. Motivated by this observation, we use the result of the L$_1$TV 
model, solved by ADMM \cite{wu2012augmented}, as the initialization 
$u^0$ or $\bm{u}^0$.

\begin{figure}[!ht]
\begin{center}
\begin{tabular}{ccc}
\includegraphics[height=2.2cm]{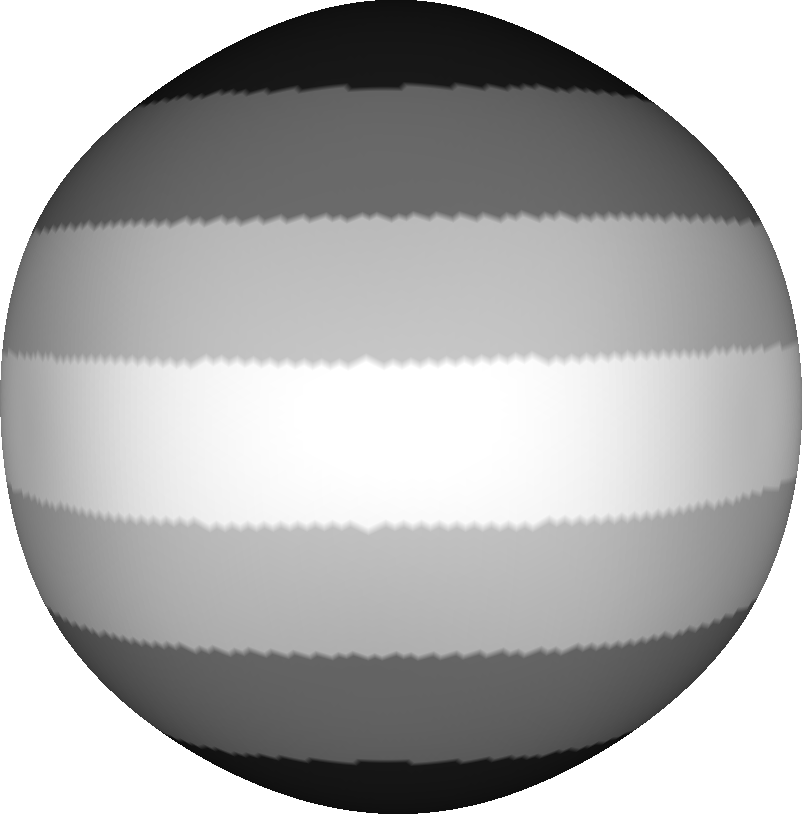}  &
\includegraphics[height=3.0cm]{bottle.png}  &
\includegraphics[height=2.6cm]{bunny.png} \\
\includegraphics[height=2.2cm]{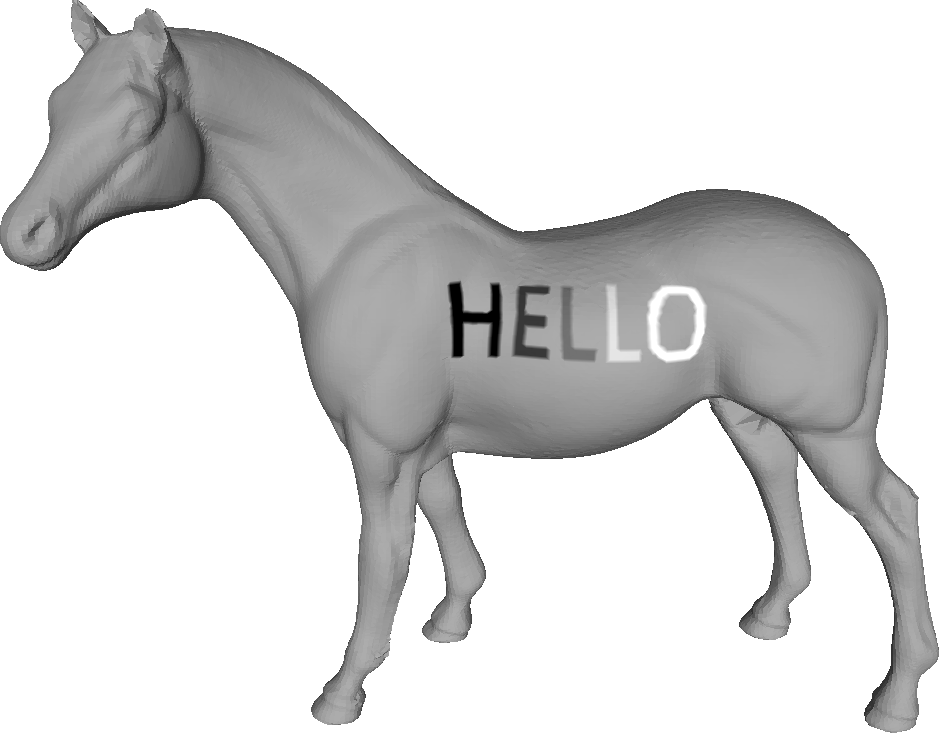}  &
\includegraphics[height=1.7cm]{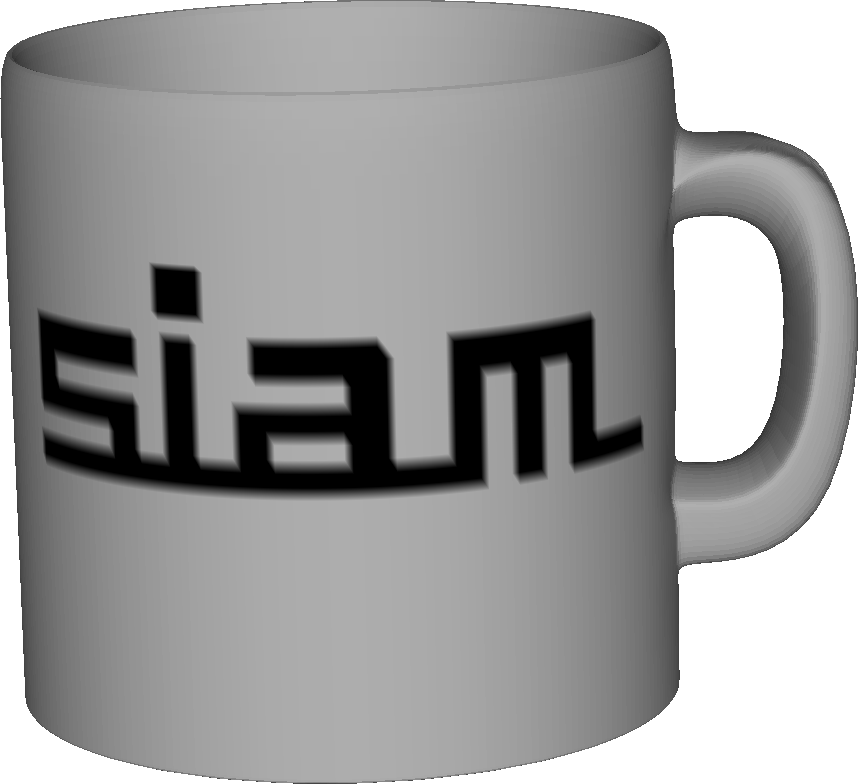}  &
\includegraphics[height=2.2cm]{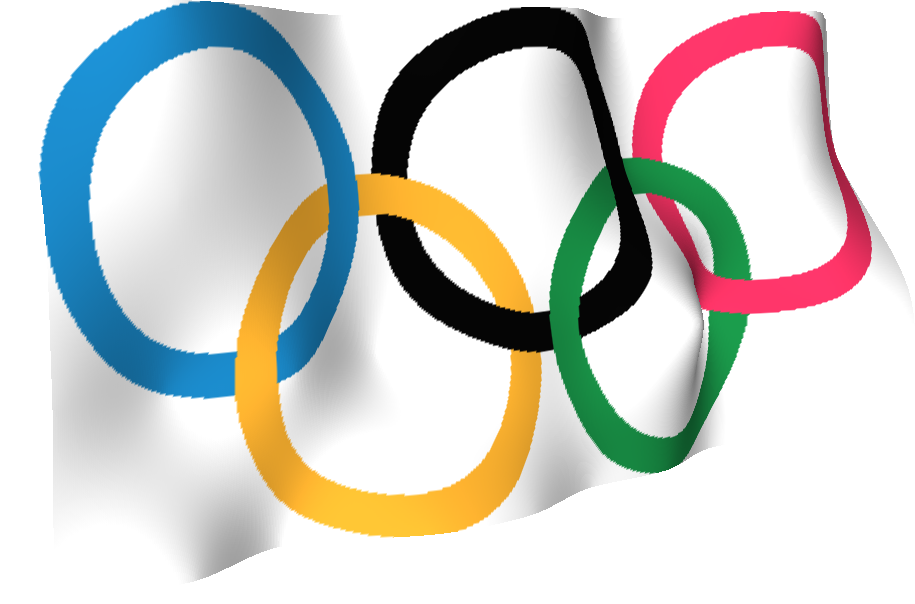}  
\end{tabular}
\caption{Test images. From top left to bottom right: Ball, Bottle, Bunny,
Horse, Mug and Flag.}
\label{testimage}
\end{center}
\end{figure}

We test six images shown in Figure \ref{testimage}, including four grayscale 
images and two color images. The sizes of the underlying meshes are shown 
in Table \ref{infimage}. Suppose the original grayscale image is 
$\underline{u}$. We impose some salt-and-pepper noise and obtain
the observed image by
$$
f = \underline{u} + n,
$$
The noise of color image is imposed similarly.

\begin{table}[!ht]
\footnotesize
  \centering
  \caption{Underlying meshes of test images}
  \label{infimage}
  {\renewcommand{\arraystretch}{1.1}
  \begin{tabular}{|c|cc|}
    \hline
    Image  &  Number of vertices & Number of triangles  \\
    \hline
    \hline
    Ball  & 40962 &  81920 \\
    Bottle  & 132300 & 264600 \\
    Horse  & 48480 & 96956 \\
    Mug  & 29664 & 59328 \\
    \hline
    Bunny  & 34817 & 69630 \\
    Flag  & 163116 &  324500 \\
    \hline
    \end{tabular}
  }
\end{table}

\subsection{Performance of PLM}

In this subsection, we show the performance of the proposed method
and compare it with the L$_1$TV model solved by ADMM. We consider four 
levels of noise $\rho = 0.05,0.10,0.20,0.30$ and $p=0.1,0.3,0.5,0.7,0.9$.
The noisy images with $\rho=0.10$ are shown in Figure \ref{noise}.

\begin{figure}[!ht]
\begin{center}
\begin{tabular}{cccc}
\includegraphics[height=2.2cm]{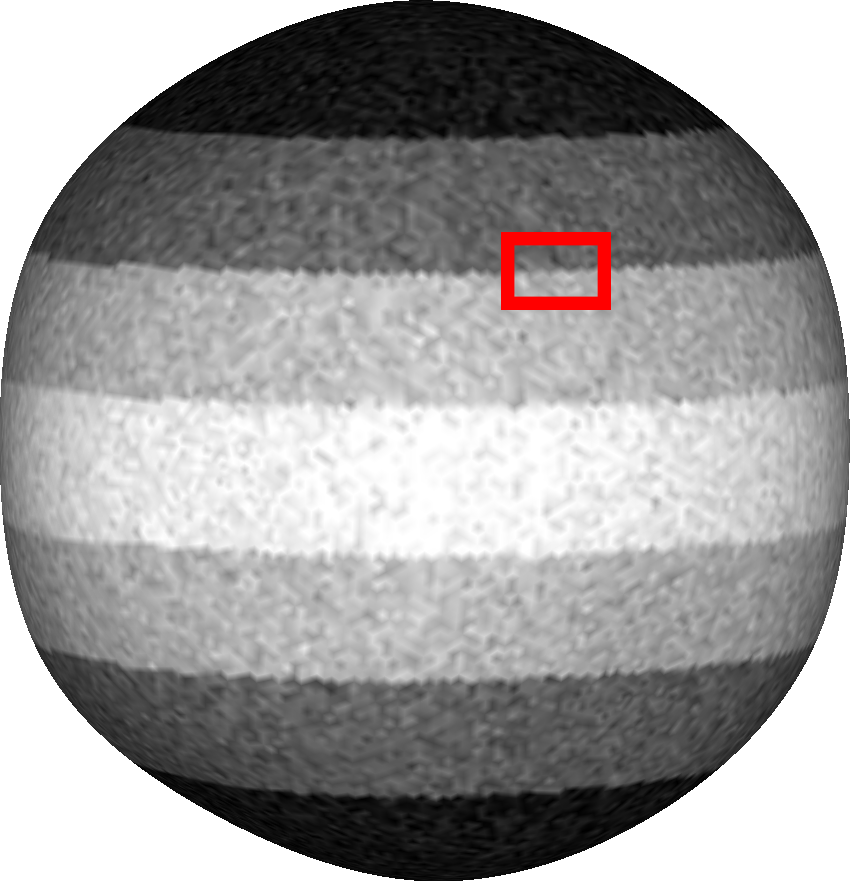} &
\includegraphics[width=1.7cm]{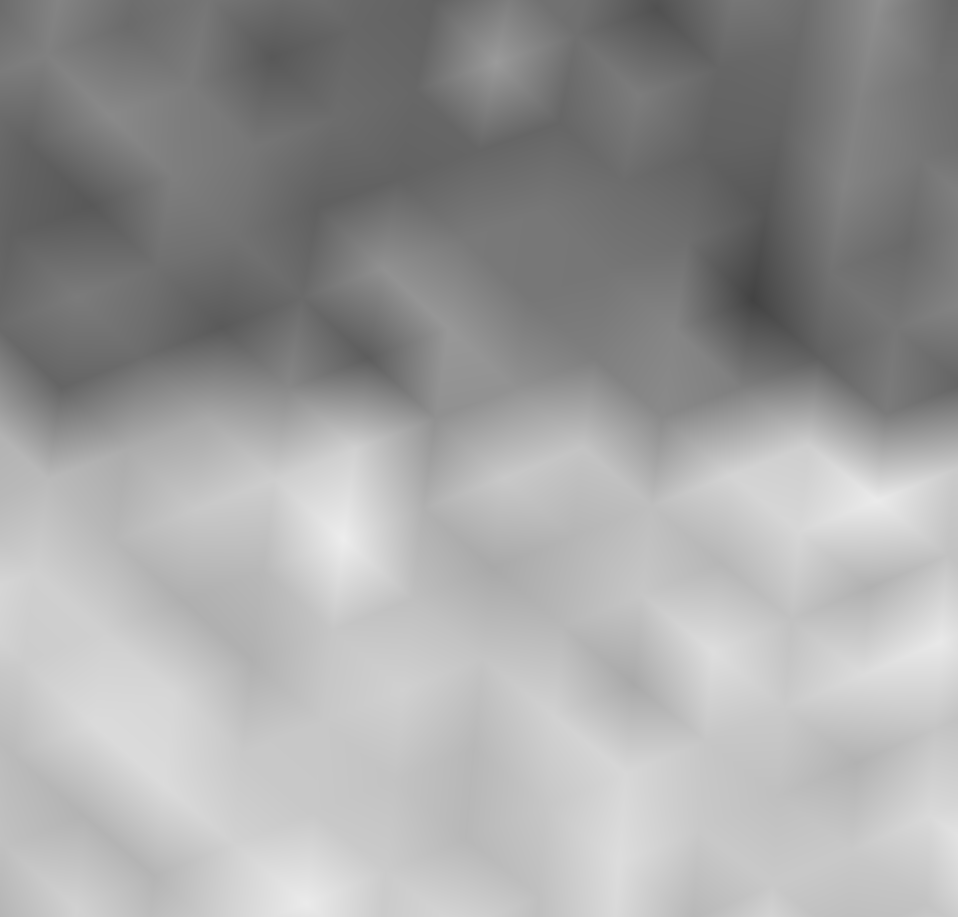}   &
\includegraphics[height=2.8cm]{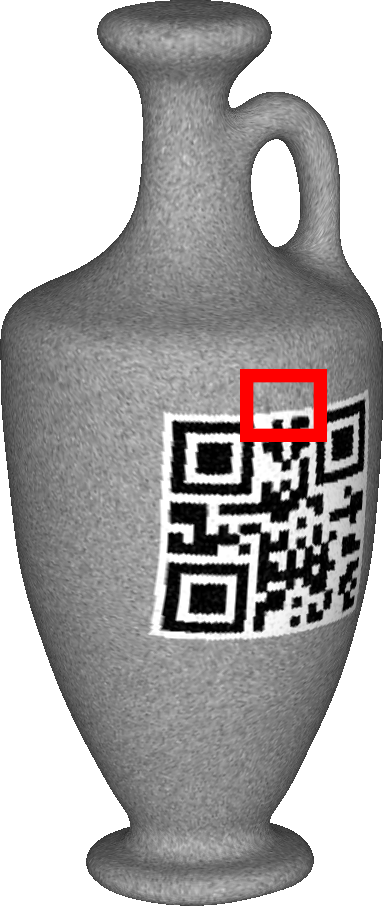} &
\includegraphics[width=1.7cm]{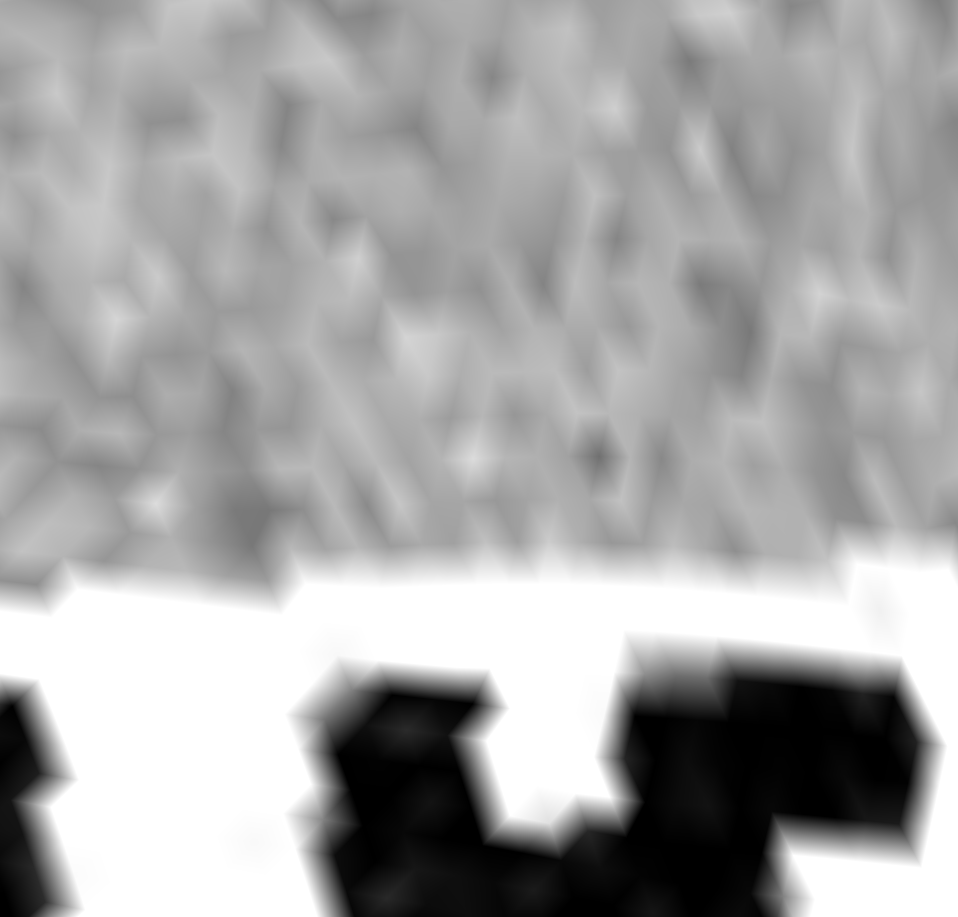}  \\[1mm]
\includegraphics[height=2.5cm]{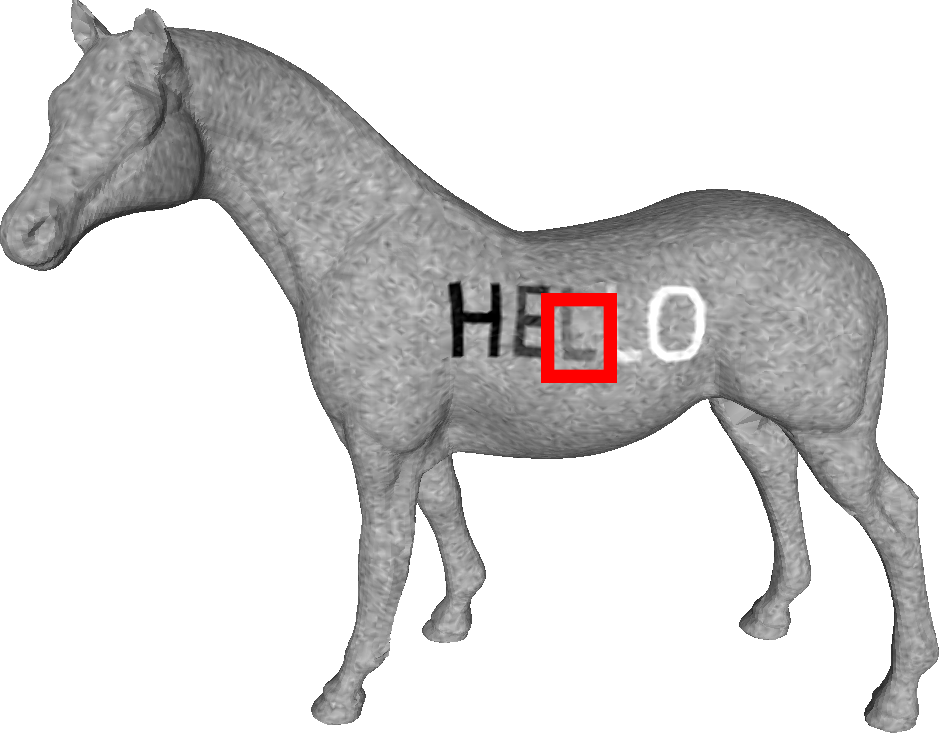}  &
\includegraphics[width=1.7cm]{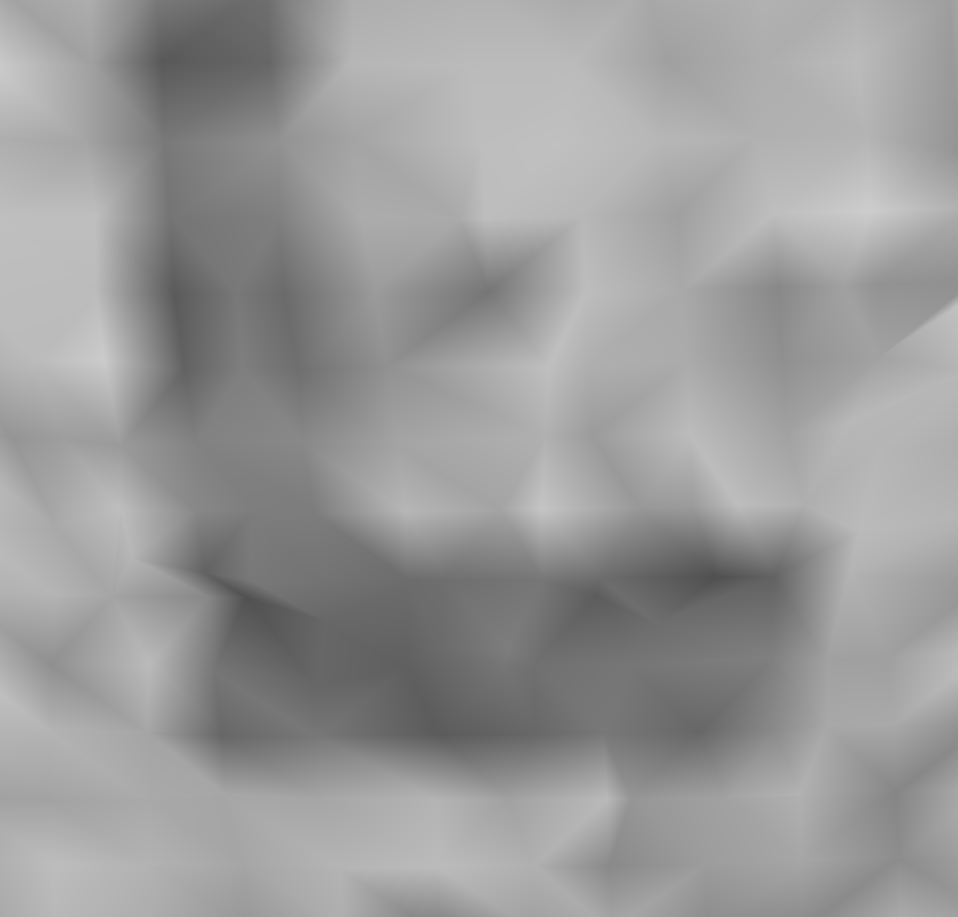}   &
\includegraphics[height=2.3cm]{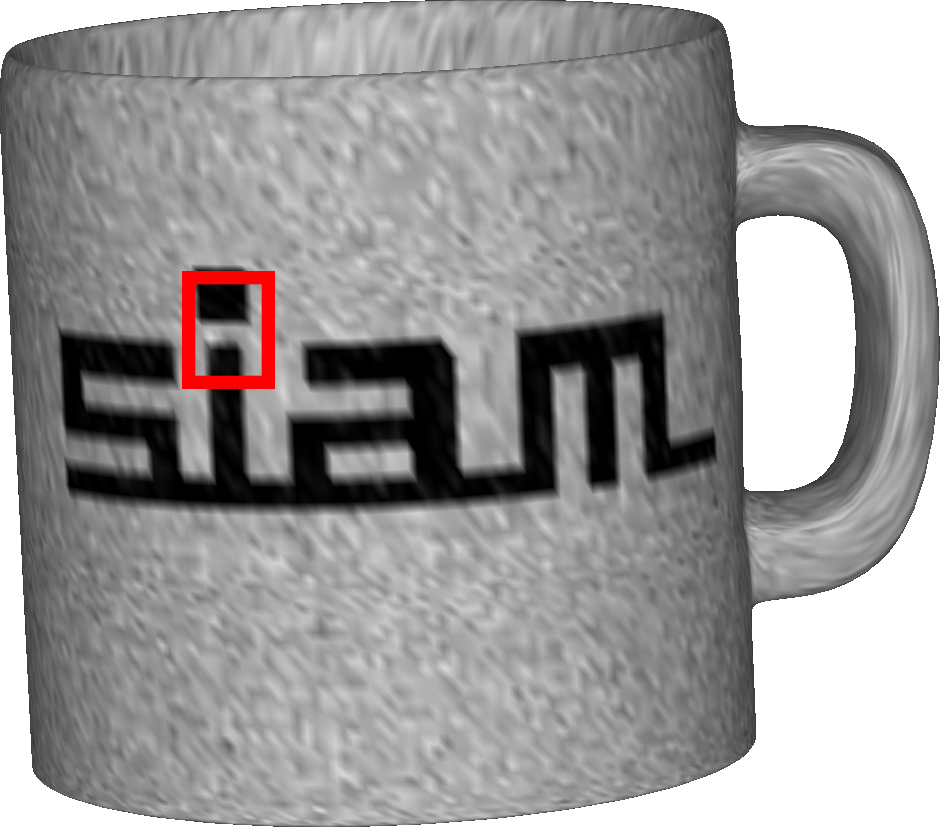}  &
\includegraphics[width=1.7cm]{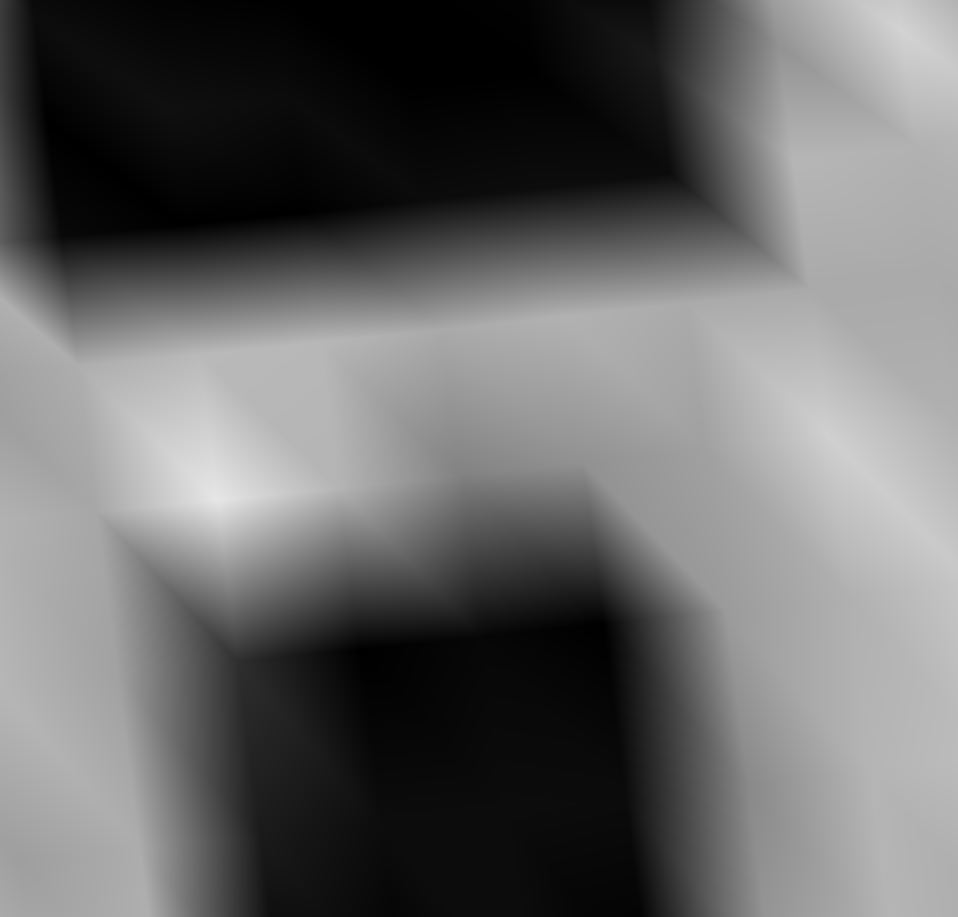}   \\[1mm]
\includegraphics[height=2.5cm]{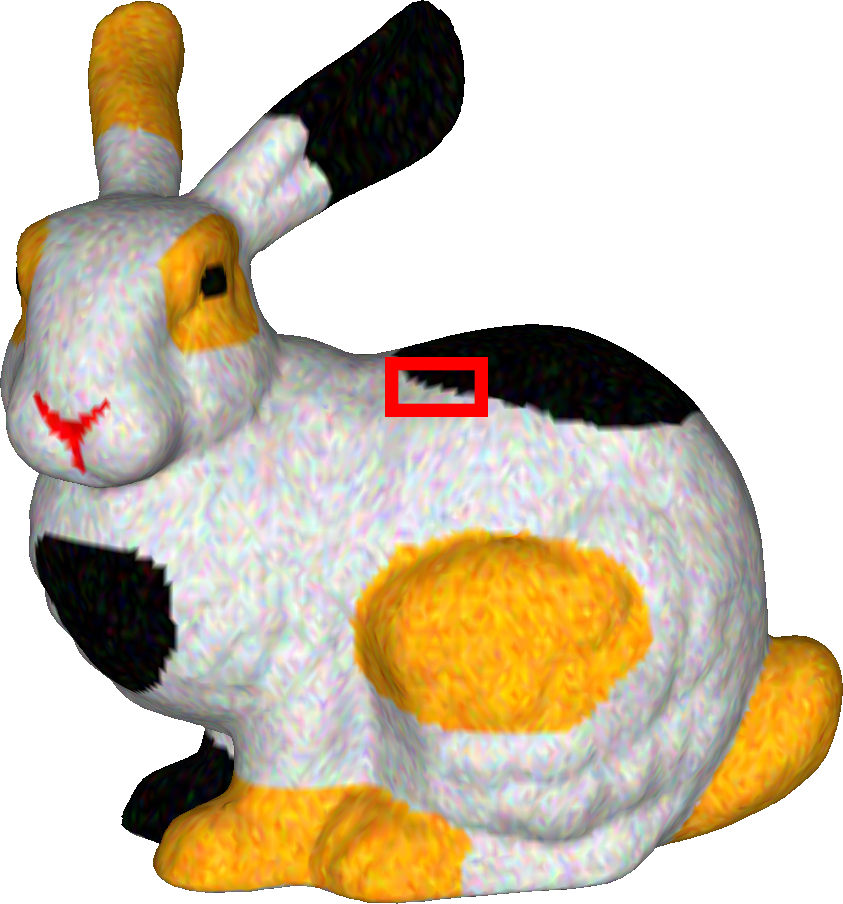}  &
\includegraphics[width=1.7cm]{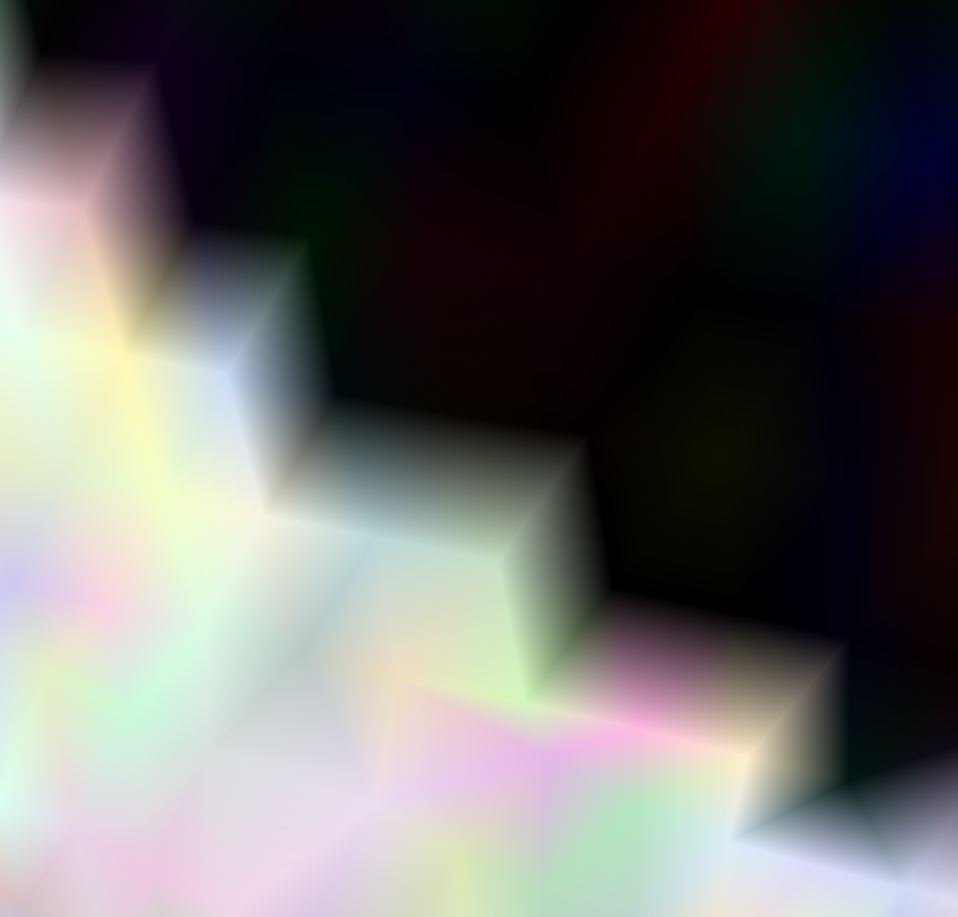}   &
\includegraphics[height=2.3cm]{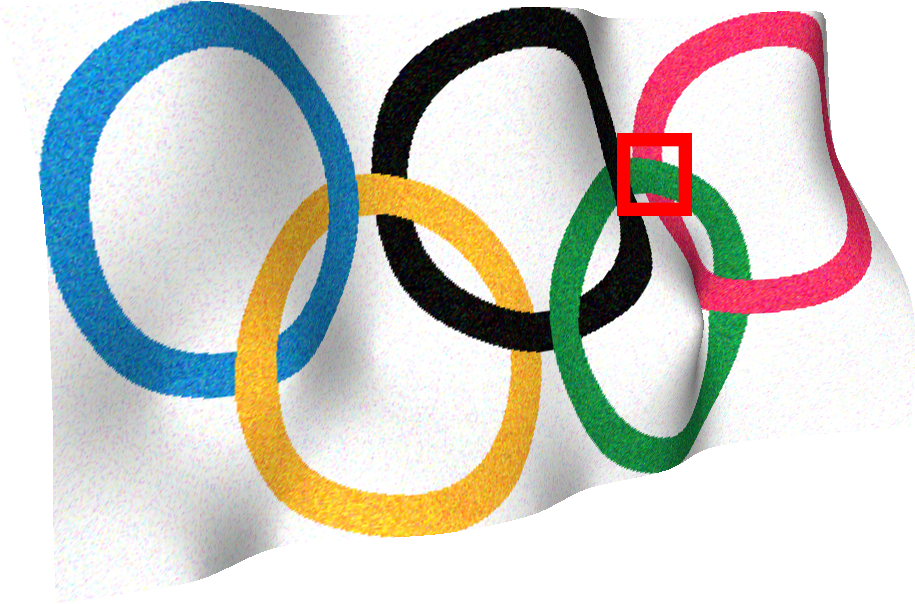}  &
\includegraphics[width=1.7cm]{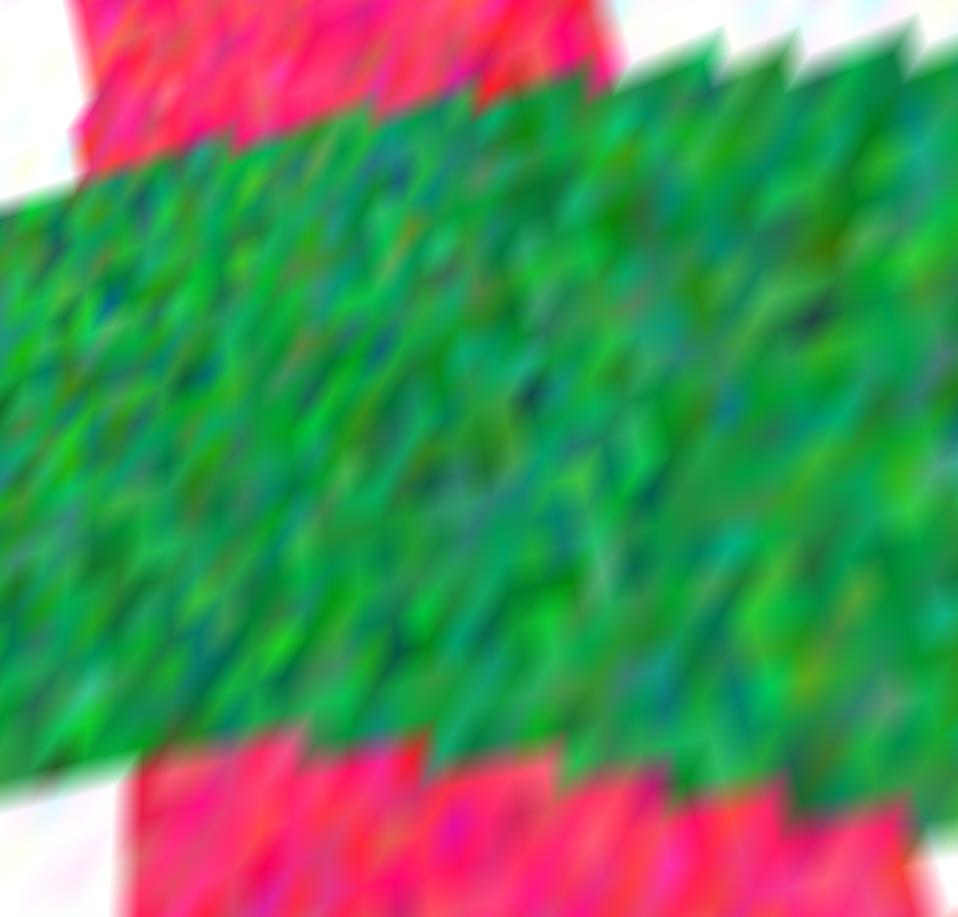}  
\end{tabular}
\caption{Noisy images with zoomed regions, where the noise level is 
$\rho=0.10$. The second (resp., fourth) column is the corresponding 
zoomed regions of the first (resp., third) column.}
\label{noise}
\end{center}
\end{figure}

\begin{table}[!ht]
\footnotesize
  \centering
  \caption{Comparison results of different methods. The running time 
  is measured in seconds. The best results among L$_p$TV are highlighted
  in boldface.}
  \label{table_de}
  {\renewcommand{\arraystretch}{1.1}
  \begin{tabular}{|c|c|c|c|ccccc|}
    \hline
    \multirow{2}{*}{Image}  & Noise  & & \multirow{2}{*}{L$_1$TV}
    & \multicolumn{5}{c|}{L$_p$TV}     \\
    \cline{5-9}
    &  level &  &  &  $p=0.9$ & $p=0.7$  & $p=0.5$ &  $p=0.3$ & $p=0.1$ \\
    \hline
    \hline
    \multirow{6}{*}{Ball} & \multirow{2}{*}{0.05} & PSNR & - 
    & - & - & - & - & \textbf{-}  \\
    & & Time  &  -  & - & - & - & - & \textbf{-}    \\
    \cline{2-9}
    & \multirow{2}{*}{0.10} & PSNR & - 
    & -  & -  & - & - & \textbf{-}  \\
    & & Time  & -  & - & - & - & \textbf{-} & -  \\
    \cline{2-9}
    & \multirow{2}{*}{0.20} & PSNR & - 
    & -  & - & -  & - & \textbf{-}   \\
    & & Time  & - & - & - & - & \textbf{-} & -   \\
    \cline{2-9}
    & \multirow{2}{*}{0.30} & PSNR & - 
    & -  & -  & - & - & \textbf{-}  \\
    & & Time  & -  & - & - & - & \textbf{-} & -  \\
     \hline
    \multirow{6}{*}{Bottle} & \multirow{2}{*}{0.05} & PSNR & - 
    & - & - & - & - & \textbf{-}  \\
    & & Time  &  -  & - & - & - & - & \textbf{-}    \\
    \cline{2-9}
    & \multirow{2}{*}{0.10} & PSNR & - 
    & -  & -  & - & - & \textbf{-}  \\
    & & Time  & -  & - & - & - & \textbf{-} & -  \\
    \cline{2-9}
    & \multirow{2}{*}{0.20} & PSNR & - 
    & -  & - & -  & - & \textbf{-}   \\
    & & Time  & - & - & - & - & \textbf{-} & -   \\
    \cline{2-9}
    & \multirow{2}{*}{0.30} & PSNR & - 
    & -  & -  & - & - & \textbf{-}  \\
    & & Time  & -  & - & - & - & \textbf{-} & -  \\
     \hline
    \multirow{6}{*}{Horse} & \multirow{2}{*}{0.05} & PSNR & - 
    & - & - & - & - & \textbf{-}  \\
    & & Time  &  -  & - & - & - & - & \textbf{-}    \\
    \cline{2-9}
    & \multirow{2}{*}{0.10} & PSNR & - 
    & -  & -  & - & - & \textbf{-}  \\
    & & Time  & -  & - & - & - & \textbf{-} & -  \\
    \cline{2-9}
    & \multirow{2}{*}{0.20} & PSNR & - 
    & -  & - & -  & - & \textbf{-}   \\
    & & Time  & - & - & - & - & \textbf{-} & -   \\
    \cline{2-9}
    & \multirow{2}{*}{0.30} & PSNR & - 
    & -  & -  & - & - & \textbf{-}  \\
    & & Time  & -  & - & - & - & \textbf{-} & -  \\
     \hline
    \multirow{6}{*}{Mug} & \multirow{2}{*}{0.05} & PSNR & 35.61 
    & 36.90 & 38.75 & 39.39 & 39.65 & \textbf{39.71}  \\
    & & Time  &  16.8  & 36.6 & 32.9 & 24.5 & 23.1 & \textbf{21.6}    \\
    \cline{2-9}
    & \multirow{2}{*}{0.10} & PSNR & 34.21 
    & 34.69  & 36.15  & 36.65 & 37.01 & \textbf{37.19}  \\
    & & Time  & 16.4 & 34.5 & 38.3 & 28.6 & \textbf{26.0} & 32.68  \\
    \cline{2-9}
    & \multirow{2}{*}{0.20} & PSNR & 31.77 
    & 32.42  & 33.77 & 34.32  & 34.53 & \textbf{34.61}   \\
    & & Time  & 18.6 & 247.7 & 36.7 & 50.5 & 37.2 & \textbf{36.0}   \\ 
    \cline{2-9}
    & \multirow{2}{*}{0.30} & PSNR & 30.46 
    & 31.10  & 31.95  & 32.41 & 32.68 & \textbf{32.74}  \\
    & & Time  & 20.9  & 44.4 & 67.3 & 47.0 & 50.2 &\textbf{39.5}  \\
     \hline
    \multirow{6}{*}{Bunny} & \multirow{2}{*}{0.05} & PSNR & - 
    & - & - & - & - & \textbf{-}  \\
    & & Time  &  -  & - & - & - & - & \textbf{-}    \\
    \cline{2-9}
    & \multirow{2}{*}{0.10} & PSNR & - 
    & -  & -  & - & - & \textbf{-}  \\
    & & Time  & -  & - & - & - & \textbf{-} & -  \\
    \cline{2-9}
    & \multirow{2}{*}{0.20} & PSNR & - 
    & -  & - & -  & - & \textbf{-}   \\
    & & Time  & - & - & - & - & \textbf{-} & -   \\
    \cline{2-9}
    & \multirow{2}{*}{0.30} & PSNR & - 
    & -  & -  & - & - & \textbf{-}  \\
    & & Time  & -  & - & - & - & \textbf{-} & -  \\
     \hline
    \multirow{6}{*}{Flag} & \multirow{2}{*}{0.05} & PSNR & - 
    & - & - & - & - & \textbf{-}  \\
    & & Time  &  -  & - & - & - & - & \textbf{-}    \\
    \cline{2-9}
    & \multirow{2}{*}{0.10} & PSNR & - 
    & -  & -  & - & - & \textbf{-}  \\
    & & Time  & -  & - & - & - & \textbf{-} & -  \\
    \cline{2-9}
    & \multirow{2}{*}{0.20} & PSNR & - 
    & -  & - & -  & - & \textbf{-}   \\
    & & Time  & - & - & - & - & \textbf{-} & -   \\
    \cline{2-9}
    & \multirow{2}{*}{0.30} & PSNR & - 
    & -  & -  & - & - & \textbf{-}  \\
    & & Time  & -  & - & - & - & \textbf{-} & -  \\
     \hline
    \end{tabular}
  }
\end{table}

The comparison results are shown in Table \ref{table_de}, where all results 
are averaged over 10 trials for each case. We consider the PSNR. L$_p$TV 
outperforms L$_1$TV for all cases. For most cases, the smaller $p$ is, the higher 
PSNR is obtained. This is very natural, because a smaller $p$ would enhance the 
sparsity of the data fitting term of the solution. 
Since we consider 
recovering images corrupted by salt-and-pepper noise, L$_p$TV with a 
smaller $p$ can yield better denoising results. In addition, it is 
observed that as the noise level increases, the performance of the 
L$_p$TV model degrades. This decline is attributed to the fact that, 
at higher noise levels, the requirement for sparse reconstruction in 
the data fitting term diminishes.

The visual comparison of some restorations is shown in Figure \ref{recover1} 
and \ref{recover2}. All results are with respect to $\rho=0.10$. From the 
zoomed regions, we can see the recovered results by the L$_p$ TV model,
especially by $p=0.1$, have much better performances than those by the L$_1$TV model.

\section{Conclusions}\label{sec6}

In this paper, we consider recovering images corrupted by salt-and-pepper 
noise on surfaces with nonconvex variational methods, i.e., proposing 
the L$_p$TV model on surfaces. From the perspective of sparse 
reconstruction, L$_p$TV model is suitable for salt-and-pepper noise 
removal. The lower bound property of the model is presented. Motivated 
by the lower bound property, we adopt proximal linearization method with 
the thresholding and support shrinking strategy. We establish the global 
convergence of the algorithm. Numerical examples are given to show good 
performance of the algorithm.

We adopt linear interpolation in this work. Linear interpolation 
is a basic way for interpolation and leads to
a simple relationship between the image $u$ and its gradient $Du$;
see (\ref{BasisonMesh}). Based on this, we can prove 
the lower bound theory, i.e., Theorem \ref{lowerbound}. Moreover,
the simple relationship between $u$ and $Du$ brings convenience
for computation. If other interpolation methods are used, do we still have 
the lower bound theory and an efficient algorithm? This will be 
investigated in the future. 

There are other image processing problems on surfaces, such as image 
segmentation \cite{benninghoff2016segmentation,delaunoy2009convex,
huska2018convex,wu2012augmented,spira2005segmentation},
image decomposition \cite{wu2013variational,huska2019convex},
and image inpainting \cite{wu2005inpainting}. We will discuss how
to extend this work to these problems in the future. 

\appendix

\section{The subdifferential}\label{apsd}

\begin{definition}(Subdifferentials) Let $f : \mathbb{R}^n \to 
(-\infty, +\infty]$ be a proper, lower semicontinuous function. For a 
point $x \in \text{dom } f$,
\begin{itemize}
    \item[(i)] The regular subdifferential of $f$ at $x$ is defined as
    \[
    \hat{\partial} f(x) = \left\{ u \in \mathbb{R}^n : 
    \liminf_{y \to x, y \neq x} \frac{f(y) - f(x) - \langle u, 
    y - x \rangle}{\| y - x \|_2} \geq 0 \right\};
    \]
    \item[(ii)] The (limiting) subdifferential of $f$ at $x$ is defined as
    \[
    \partial f(x) = \left\{ u \in \mathbb{R}^n : \exists x^k \to x, 
    f(x^k) \to f(x), u^k \in \hat{\partial} f(x^k), u^k \to u \text{ as } 
    k \to \infty \right\}.
    \]
\end{itemize}
\end{definition}

\begin{remark}
For any $x \in \text{dom } f$, $\hat{\partial} f(x) \subset \partial f(x)$. 
If $f$ is differentiable at $x$, then $\hat{\partial} f(x) = \partial f(x) 
= \{ \nabla f(x) \}$.
\end{remark}

\begin{remark}
A point $x$ is a critical point of $f(x)$ if and only if 
$0 \in \partial f(x)$. If $x^{*}$ is a local minimizer of $f(x)$, then 
$0 \in \partial f(x^{*})$.
\end{remark}

\section{KL functions}\label{apKL}

\begin{definition}(KL property and KL functions)
The function $f: \mathbb{R}^n \rightarrow (-\infty,+\infty]$ has 
KL property at $\hat{x} \in \operatorname{dom}\partial f$ if there 
exist $a \in (0,+\infty]$, a neighborhood $U$ of $\hat{x}$, and a 
continuous concave function $\varphi: [0,a) \rightarrow [0,+\infty)$ such that
\begin{enumerate}[(i)]
    \item $\varphi(0) = 0$;
    \item $\varphi$ is $C^1$ on $(0,a)$;
    \item for all $s \in (0,a)$, $\varphi'(s) > 0$;
    \item for all $x \in U \cap \{f(\hat{x}) < f(x) < f(\hat{x}) + a\}$, 
    the Kurdyka-Łojasiewicz inequality holds:
\end{enumerate}
\[
\varphi'(f(x) - f(\hat{x})) \operatorname{dist}(0, \partial f(x)) \geq 1.
\]

If the function $f$ satisfies the KL property at each point of 
$\operatorname{dom}\partial f$, $f$ is called a KL function.
\end{definition}

{\small
\bibliographystyle{abbrv}
\bibliography{ref}
}

\end{document}